\pgfplotsset{compat=1.17}
\def\spann{\mathop{\rm span}\nolimits}
\newcommand{\bbZ}{\mathbb{Z}}
\newcommand{\bbR}{\mathbb{R}}
\newcommand{\cD}{\mathcal{D}}
\newcommand{\cL}{\mathcal{L}}
\newcommand{\cV}{\mathcal{V}}
\newcommand{\cR}{\mathcal{R}}
\newtheorem{theorem}{Theorem}[section]
\newtheorem{corollary}{Corollary}[theorem]
\newtheorem{proposition}[theorem]{Proposition}
\theoremstyle{definition}
\newtheorem{example}{Example}[section]
\title{Alternating Minimization for Regression with Tropical Rational Functions}
\author{Alex Dunbar\thanks{Department of Mathematics, Emory University, Atlanta, GA, USA} \and Lars Ruthotto$^*$}
\date{}
\begin{document}

\maketitle

\begin{abstract}
    We propose an alternating minimization heuristic for regression over the space of tropical rational functions with fixed exponents. The method alternates between fitting the numerator and denominator terms via tropical polynomial regression, which is known to admit a closed form solution. We demonstrate the behavior of the alternating minimization method experimentally. Experiments demonstrate that the heuristic provides a reasonable approximation of the input data. Our work is motivated by applications to ReLU neural networks, a popular class of network architectures in the machine learning community which are closely related to tropical rational functions.
\end{abstract}

\section{Introduction}

Tropical algebra uses a semiring structure on $\bbR \cup \{-\infty\}$ where the tropical sum of two elements is their maximum and tropical multiplication is standard addition. In this setting, $n$-variable tropical polynomials  are functions that are the pointwise maximum of finitely many affine functions with slopes in a finite set $W \subseteq \bbZ^n_{\geq 0}$. Such functions are piecewise linear and convex. Tropical rational functions are the standard difference between two tropical polynomials and therefore continuous piecewise linear functions. 

In this paper, we are interested in fitting tropical rational functions to data and developing a numerical method for solving regression problems for this function class. Specifically, we consider the $\ell^\infty$ regression problem over tropical rational functions with exponents in a fixed finite set $W \subseteq \bbZ^n_{\geq 0}$: Given a dataset $\cD = \left\{(\mathbf{x}^{(1)},y^{(1)}),\allowbreak  (\mathbf{x}^{(2)},y^{(2)}),  \ldots,  (\mathbf{x}^{(N)},y^{(N)}) \right\}\subseteq \bbR^n\times \bbR$, find
\begin{equation}\label{eq:ProblemDef}
\arg\min_{\mathbf{p},\mathbf{q}}\left\|\begin{bmatrix}\max_{\mathbf{w} \in W}(\mathbf{w}^\top \mathbf{x}^{(1)} + p_\mathbf{w}) \\  \max_{\mathbf{w} \in W}(\mathbf{w}^\top \mathbf{x}^{(2)} + p_\mathbf{w})\\ \vdots \\ \max_{\mathbf{w} \in W}(\mathbf{w}^\top \mathbf{x}^{(N)} + p_\mathbf{w})\end{bmatrix} - \begin{bmatrix}\max_{\mathbf{w} \in W}(\mathbf{w}^\top \mathbf{x}^{(1)} + q_\mathbf{w}) \\ \max_{\mathbf{w} \in W}(\mathbf{w}^\top \mathbf{x}^{(2)} + q_\mathbf{w}) \\ \vdots \\ \max_{\mathbf{w} \in W}(\mathbf{w}^\top \mathbf{x}^{(N)} + q_\mathbf{w})\end{bmatrix} - \begin{bmatrix} y^{(1)}\\ y^{(2)}\\ \vdots \\ y^{(N)} \end{bmatrix}\right\|_\infty,
\end{equation}
where the coefficient vectors $\mathbf{p} = (p_{\mathbf{w}})_{\mathbf{w} \in W}$ and $\mathbf{q} = (q_{\mathbf{w}})_{\mathbf{w} \in W}$ define the tropical polynomials $p(\mathbf{x}) = \max_{\mathbf{w} \in W}(\mathbf{w}^\top \mathbf{x} + p_{\mathbf{w}})$ and $q(\mathbf{x}) = \max_{\mathbf{w} \in W}(\mathbf{w}^\top \mathbf{x} + q_{\mathbf{w}})$, respectively. 
Since optimizing both vectors simultaneously is difficult, we propose a heuristic for the solution of \eqref{eq:ProblemDef} that alternates between solving the tropical polynomial regression problem of finding the optimal vector $\mathbf{p}$ for fixed $\mathbf{q}$ and the similar problem of finding the optimal $\mathbf{q}$ given fixed $\mathbf{p}$.

Our proposed heuristic alternates between updating $\mathbf{p}$ and $\mathbf{q}$ using results from tropical polynomial regression; see, e.g., \cite{AKIAN2011ApproxSemiModules,maragos2020multivariate,maragos2019tropical}. In each substep, we leverage the algebraic structure of tropical polynomials and use the fact that the closed-form solution involves only (min-plus and max-plus) matrix-vector products and vector addition. This renders each iteration of our heuristic  computationally cheap. Geometrically, our proposed heuristic searches the nondifferentiability locus of the $\ell^\infty$ loss in a way such that the loss is nonincreasing. In our experiments, a few iterations of this heuristic provide a reasonable approximation of the input data.

Problem \eqref{eq:ProblemDef} fits into the framework of piecewise linear regression. Such problems have received some attention from the optimization community \cite{kazda_nonconvex_2021,magnani_convex_2009,toriello_fitting_2012} and have recently seen great interest from the deep learning community. In this setting, piecewise linear functions are commonly parametrized using \emph{ReLU neural networks}, functions which are expressible as the repeated compositions of affine transformations with a ReLU activation function $\sigma(\mathbf{x}) = \max(\mathbf{x},0)$; see, e.g., \cite{arora2018understanding,daubechies_nonlinear_2022,nair2010rectified}.  Such functions have proven to be very expressive, and the optimization problem over ReLU networks, although being plagued by non-convexity and non-smoothness, can often be solved to a reasonable accuracy with  variants of stochastic gradient descent.

Recent work \cite{Charisopoulos18TropicalApproach,Maragos2021TropML,zhang2018tropical} has shown that ReLU neural networks correspond to tropical algebraic objects. This connection has been leveraged to analyze the complexity of a neural network by counting its linear regions \cite{Charisopoulos18TropicalApproach,zhang2018tropical}, minimize trained networks \cite{smyrnis2020multiclass,Smyrnis2019TropicalDivision}, and extract linear regions of a trained network \cite{trimmel2021tropex}.

Piecewise linear regression utilizing a parametrization through max-plus algebra is studied in \cite{kazda_nonconvex_2021,toriello_fitting_2012}, where the optimization problem is interpreted through mixed integer programming. In these works, the authors minimize the $\ell^2$ norm and allow the set $W$ to vary in $\bbR^n$ during the optimization. Our approach differs in that we fix $W$ and use the $\ell^\infty$ norm as an objective function, allowing the heuristic to utilize the algebraic structure of tropical polynomials.

The remainder of the paper is organized as follows: Section \ref{sec:Background} reviews the relevant background from tropical algebra and ReLU neural networks. Section \ref{sec:Algorithm} presents the alternating algorithm for tropical regression. Section \ref{sec:Expirements} details numerical experiments with tropical rational regression. Finally, Section \ref{sec:Conclusions} presents concluding remarks and directions for future work. 

\section{Background} \label{sec:Background}

In this section, we review relevant background from tropical algebra, tropical polynomial regression, and ReLU neural networks. We adopt the following notational conventions: Bold lowercase letters denote vectors and bold uppercase letters denote matrices. If $\mathbf{x}$ is a vector, $x_i$ is the $i^{\text{th}}$ component of $\mathbf{x}$. Collections of vectors are indexed by superscripts in parentheses. Sequences are denoted by superscripts without parentheses. The all ones vector is denoted $\mathbf{1}$. 

\subsection{Tropical Algebra}

This section briefly recalls relevant ideas and notation from tropical algebra. A more thorough introduction can be found in \cite{brugallé2015TropicalIntro} and a standard reference is \cite{MaclaganSturmfels}. Tropical geometry has recently seen applications outside of algebraic geometry in optimization and statistics \cite{gartner2008tropicalSVM,joswig_monomial_2020,pachter_tropical_2004,tang2020tropical,Yoshida2021TropicalSVMeval}. The survey article \cite{Maragos2021TropML} provides an overview of applications of tropical geometry in machine learning. 

The main object of study in tropical algebra is the \emph{tropical semiring} $\mathbb{T}:=(\bbR \cup \{-\infty\},\oplus, \odot)$. Tropical addition is $a \oplus b = \max(a,b)$ and tropical multiplication is $a \odot b = a + b$. Tropical addition and tropical multiplication are both associative and commutative. The multipicative identity is $0$ and every finite element $a$ has a tropical multiplicative inverse $-a$. The additive identity is $-\infty$ and no element of $\mathbb{T}$ has an additive inverse. Tropical exponentials are repeated tropical multiplication and denoted $a^{\odot w}:= wa$ for $w \in \mathbb{Z}$. 

Given a collection of $n$ variables $x_1,x_2,\ldots, x_n$, we use multi-index notation to describe the \emph{tropical monomial} 

\[c \odot \mathbf{x}^{\odot \mathbf{w}} := c + \mathbf{w}^{\top}\mathbf{x} = c + w_1x_1 + w_2x_2 + \ldots + w_n x_n .\]
 Analogously to standard algebra, the \emph{tropical polynomials} in $n$ variables are defined as finite sums of tropical monomials $\mathbf{x}^{\odot \mathbf{w}}$. That is,  
\[\mathbb{T}[x_1,x_2,\ldots, x_n] := \left\{\bigoplus_{{\bf w} \in W} c_{{\bf w}} \odot \mathbf{x}^{\odot {\bf w}}\middle\vert c_{\mathbf{w}} \in \mathbb{T}, W \subseteq \mathbb{Z}^n_{\geq 0}\text{ finite}\right\} = \left\{\max_{\mathbf{w}\in W}(c_{\mathbf{w}}+{\mathbf{w}}^\top \mathbf{x})\middle\vert c_{{\bf w}} \in \mathbb{T}, W \subseteq \mathbb{Z}^n_{\geq 0}\text{ finite}\right\}.\]
The set of tropical polynomials is also a semiring with operations extending tropical addition and tropical multiplication. Tropical polynomials are convex piecewise linear functions. 

Finally, \emph{tropical rational functions} are functions which are tropical quotients of tropical polynomials, 

\[\mathbb{T}(x_1,x_2,\ldots, x_n):= \{p(\mathbf{x}) \oslash q(\mathbf{x}) \;\vert\; p,q \in \mathbb{T}[x_1,x_2,\ldots, x_n]\} = \{p(\mathbf{x}) - q(\mathbf{x}) \;\vert\; p,q \in \mathbb{T}[x_1,x_2,\ldots, x_n]\}.\]%
 Tropical rational functions are piecewise linear but not necessarily convex. However, they are the difference of convex functions. For a fixed set $W \subseteq \bbZ^n_{\geq 0}$, we use $\mathbb{T}[\mathbf{x}]_W$ to denote the tropical polynomials with exponents in $W$. Similarly, $\mathbb{T}(\mathbf{x})_{W}$ denotes the tropical rational functions with exponents in $W$. When $W = \{\mathbf{w} \in \bbZ^n \vert\,  0 \leq w_i \leq d \text{ for } i = 1,2,\ldots, n\}$, we say that functions $f \in \mathbb{T}(\mathbf{x})_W$ have \emph{degree} $d$.  

\paragraph{Tropical Linear Algebra} Many concepts from classical linear algebra over a field generalize to $\mathbb{T}^n$. Given $\mathbf{u},\mathbf{v}\in \mathbb{T}^n$, define vector addition as the componentwise maximum 

\[\mathbf{u}\oplus\mathbf{v} = \max(\mathbf{u},\mathbf{v}) := \begin{bmatrix} \max(u_1,v_1)& \max(u_2,v_2)& \cdots &\max(u_n,v_n) \end{bmatrix}^\top.\]
 For $\lambda \in \mathbb{T}$ and $\mathbf{u} \in \mathbb{T}^n$, define scalar multiplication as

\[\lambda\odot\mathbf{u} := \lambda \mathbf{1} + \mathbf{u} = \begin{bmatrix}u_1 + \lambda & u_2 + \lambda & \cdots  & u_n + \lambda \end{bmatrix}^\top.\]
 As in classical linear algebra, maps $\mathbb{T}^n \to \mathbb{T}^m$ which are compatible with the vector addition and scalar multiplication on $\mathbb{T}^n$ can be represented by matrix multiplication \cite{AKIAN2011ApproxSemiModules}. Given an $m\times n$ matrix $\mathbf{A}$ and a vector $\mathbf{u} \in \mathbb{T}^n$, \emph{max-plus matrix-vector multiplication} is defined as 

\[\mathbf{A} \boxplus \mathbf{x} := \begin{bmatrix} a_{1,1} & a_{1,2} & \ldots & a_{1,n}\\ a_{2,1} & a_{2,2} & \ldots & a_{2,n}\\ \vdots & \vdots & \vdots & \vdots \\ a_{m,1} & a_{m,2} & \ldots & a_{m,n}\\\end{bmatrix} \boxplus \begin{bmatrix} u_1\\u_2\\\vdots\\ u_n \end{bmatrix} = \begin{bmatrix} \max_{1\leq j\leq n}(a_{1,j} + u_j)\\ \max_{1\leq j\leq n}(a_{2,j} + u_j)\\ \vdots \\ \max_{1\leq j\leq n}(a_{m,j} + u_j) \end{bmatrix}. \]

We also work with the dual \emph{min-plus matrix-vector multiplication}

\[\mathbf{A} \boxplus' \mathbf{x} := \begin{bmatrix} a_{1,1} & a_{1,2} & \ldots & a_{1,n}\\ a_{2,1} & a_{2,2} & \ldots & a_{2,n}\\ \vdots & \vdots & \vdots & \vdots \\ a_{m,1} & a_{m,2} & \ldots & a_{m,n}\\\end{bmatrix} \boxplus' \begin{bmatrix} u_1\\u_2\\\vdots\\ u_n \end{bmatrix} = \begin{bmatrix} \min_{1\leq j\leq n}(a_{1,j} + u_j)\\ \min_{1\leq j\leq n}(a_{2,j} + u_j)\\ \vdots \\ \min_{1\leq j\leq n}(a_{m,j} + u_j) \end{bmatrix}. \]

\paragraph{Tropical Hypersurfaces} In classical algebraic geometry, the zero set of a polynomial is called a \emph{hypersurface}. The tropical analog for a tropical polynomial $p \in \mathbb{T}[x_1,x_2,\ldots, x_n]$ given by $p(\mathbf{x}) = \max_{\alpha \in W}(\alpha^\top \mathbf{x} + p_\alpha)$ is the \emph{tropical hypersurface}  

\[\begin{aligned}
\cV(p) &:= \{\mathbf{x} \in \bbR^n \;\vert\; p(\mathbf{x}) = \mathbf{w}^\top \mathbf{x} + p_{\mathbf{w}} = \mathbf{v}^\top \mathbf{x} + p_{\mathbf{v}} \text{ for some } \mathbf{w} \not = \mathbf{v} \in W\}\\
&= \{\mathbf{x} \in \bbR^n \;\vert\; p \text{ is not differentiable at } \mathbf{x}.\}
\end{aligned}\]

 Tropical hypersurfaces can be given the structure of a polyhedral complex and the connected components of the set $\bbR^n \setminus \cV(p)$ are open polyhedra. A well-known result about tropical hypersurfaces is that they are determined by the polyhedral geometry of their coefficients; see, e.g., \cite[Proposition 3.1.6]{MaclaganSturmfels}. 

 If $f = p \,\oslash\, q$ is a tropical rational function, then the nondifferentiability locus of $f$ is contained in $\cV(p) \cup \cV(q)$ and this containment can be proper.

 \begin{example}\label{ex:Nondiff_trop}
Consider the tropical polynomials $p = 0\oplus x_1 \oplus x_2$ and $q = x_1 \oplus x_2$ and the tropical rational function $f = p-q = 0 \oplus x_1 \oplus x_2 - x_1 \oplus x_2$. Now, 

\[\cV(p) = \{(x_1,x_2)\in \bbR^2\vert x_1 = x_2 \geq 0 \text{ or } x_1 = 0 \geq x_2 \text{ or } x_2 = 0 \geq x_1\}, \quad \cV(q) = \{(x_1,x_2) \in \bbR^2 \vert x_1 = x_2\},\]
 and the nondifferentiability locus of $f$ is 

\[X = \{(x_1,x_2) \in \bbR^2 \vert x_1 = x_2 \leq 0\}\cup \{(x_1,x_2)\in \bbR^2 \vert x_1 = 0, x_2\leq 0\}\cup \{(x_1,x_2) \in \bbR^2 \vert x_2 = 0, x_1 \leq 0\}.\]

These sets are shown in Figure \ref{fig:Nondiff_trop_fig}.

\begin{figure}[t]
\begin{subfigure}[b]{0.3\textwidth}
\centering
\includegraphics{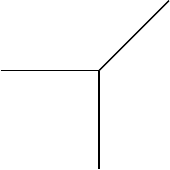}
\caption{$\cV(p)$ }
\end{subfigure}
\hfill
\begin{subfigure}[b]{0.3\textwidth}
\centering
\includegraphics{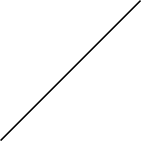}
\caption{$\cV(q)$}
\end{subfigure}
\hfill
\begin{subfigure}[b]{0.3\textwidth}
\centering
\includegraphics{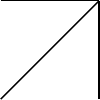}
\caption{$X$}
\end{subfigure}
\caption{The tropical hypersurfaces $\cV(p)$ and $\cV(q)$ and the nondifferentiability locus $X$ of $f = p-q$ in Example \ref{ex:Nondiff_trop}. The tropical hypersurfaces $\cV(p)$ and $\cV(q)$ divide $\bbR^2$ into polyhedral regions while $X$ divides $\bbR^2$ into regions which can be described as a finite union of polyhedra.}
\label{fig:Nondiff_trop_fig}
\end{figure}

\end{example}

\subsection{Weighted Lattices and Tropical Polynomial Regression}\label{sec:Lattice_Regression}

The vector addition and scalar multiplication on $\mathbb{T}^n$ are compatible with the partial order on $\mathbb{T}^n$ given by componentwise comparison. In \cite{maragos2019tropical}, this compatibility is studied from an optimization perspective in the framework of \emph{weighted lattices}, algebraic structures where operations are compatible with partial orders. Similar structures are discussed in detail in \cite{cuninghame-green_minimax_1979}. In \cite{HOOK2019maxplus2norm,maragos2020multivariate, maragos2019tropical,tsiamis2019sparsity,tsilivis2022towardSparsityWeighted}, this framework is leveraged to find optimal subsolutions to tropical-linear systems of equations and in particular solve tropical polynomial regression problems. We summarize the key points in this section, closely following the presentation in \cite{maragos2019tropical}. 

The vector addition on $\mathbb{T}^n$ is idempotent and gives a partial order where $\mathbf{u}\leq \mathbf{v}$ if and only if $\max(\mathbf{u},\mathbf{v}) = \mathbf{v}$. Similarly, the componentwise minimum gives the dual lattice structure. If $h:\mathbb{T}^n \to \mathbb{T}^m$ and $g: \mathbb{T}^m \to \mathbb{T}^n$ are two functions, then $h$ is a \emph{dilation} if $h(\max(\mathbf{u},\mathbf{v})) = \max(h(\mathbf{u}),h(\mathbf{v}))$, the function $g$ is an \emph{erosion} if $g(\min(\mathbf{u},\mathbf{v})) = \min(g(\mathbf{u}),g(\mathbf{v}))$, and the pair $(h,g)$ is an \emph{adjunction} if $h(\mathbf{u}) \leq \mathbf{v}$ is equivalent to $\mathbf{u} \leq g(\mathbf{v})$. As an example, for an $m \times n$ matrix $\mathbf{A}$, the map $\mathbb{T}^n \to \mathbb{T}^m$ given by $\mathbf{A} \boxplus \mathbf{u}$ is a dilation and the map $\mathbb{T}^m \to \mathbb{T}^n$ given by $(-\mathbf{A}^\top)\boxplus' \mathbf{u}$ is an erosion.

\begin{theorem}[\cite{maragos2019tropical}]\label{thm:dilation_erosion}
Given a dilation $h:\mathbb{T}^n \to \mathbb{T}^m$, there is a unique erosion $g:\mathbb{T}^m \to \mathbb{T}^n$ given by 

\[g(\mathbf{v}) = \max\{\mathbf{u}\in \mathbb{T}^n \vert h(\mathbf{u}) \leq \mathbf{v}\}\]
 such that $(h,g)$ is an adjunction. 
\end{theorem}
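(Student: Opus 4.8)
The plan is to recognize this as the residuation (adjunction) theorem for weighted lattices and to run the standard Galois-connection argument, taking care that $\mathbb{T}$ has no top element. First I would record two elementary facts about the dilation $h$. (i) $h$ is monotone: if $\mathbf{u}\leq\mathbf{v}$ then $\mathbf{v}=\max(\mathbf{u},\mathbf{v})$, so $h(\mathbf{v})=\max(h(\mathbf{u}),h(\mathbf{v}))$, i.e.\ $h(\mathbf{u})\leq h(\mathbf{v})$. (ii) $h$ distributes not merely over binary suprema but over arbitrary suprema of bounded-above subsets of $\mathbb{T}^n$; this is the property that makes the construction work, and in the weighted-lattice framework of \cite{maragos2019tropical} it is either taken as part of the definition of a dilation on the completed lattice $\overline{\mathbb{T}}=\bbR\cup\{\pm\infty\}$ or deduced from binary distributivity by a monotone limiting argument. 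I would also note that $h$ sends the least element of $\mathbb{T}^n$ to the least element of $\mathbb{T}^m$, so that the set $S_{\mathbf{v}}:=\{\mathbf{u}\in\mathbb{T}^n\mid h(\mathbf{u})\leq\mathbf{v}\}$ is nonempty for every $\mathbf{v}$.

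Step 1: well-definedness of $g$. I would show that $g(\mathbf{v}):=\sup S_{\mathbf{v}}$ is attained in $\mathbb{T}^n$ and in fact lies in $S_{\mathbf{v}}$, i.e.\ $h(g(\mathbf{v}))\leq\mathbf{v}$. Nonemptiness of $S_{\mathbf{v}}$ makes the supremum well defined in the completed lattice, and one checks it remains in $\mathbb{T}^n$. By fact (ii), $h(g(\mathbf{v}))=h\!\left(\sup_{\mathbf{u}\in S_{\mathbf{v}}}\mathbf{u}\right)=\sup_{\mathbf{u}\in S_{\mathbf{v}}}h(\mathbf{u})\leq\mathbf{v}$, since $h(\mathbf{u})\leq\mathbf{v}$ for every $\mathbf{u}\in S_{\mathbf{v}}$; hence the supremum is a maximum and the formula in the statement is meaningful.

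Step 2: adjunction, erosion property, uniqueness. For the adjunction, if $h(\mathbf{u})\leq\mathbf{v}$ then $\mathbf{u}\in S_{\mathbf{v}}$, so $\mathbf{u}\leq\sup S_{\mathbf{v}}=g(\mathbf{v})$; conversely, if $\mathbf{u}\leq g(\mathbf{v})$ then monotonicity and Step 1 give $h(\mathbf{u})\leq h(g(\mathbf{v}))\leq\mathbf{v}$. That $g$ is an erosion now follows purely formally: for any $\mathbf{w}$, $\mathbf{w}\leq g(\min(\mathbf{u},\mathbf{v}))$ iff $h(\mathbf{w})\leq\min(\mathbf{u},\mathbf{v})$ iff $h(\mathbf{w})\leq\mathbf{u}$ and $h(\mathbf{w})\leq\mathbf{v}$ iff $\mathbf{w}\leq g(\mathbf{u})$ and $\mathbf{w}\leq g(\mathbf{v})$ iff $\mathbf{w}\leq\min(g(\mathbf{u}),g(\mathbf{v}))$; since an element of a lattice is determined by its principal down-set, $g(\min(\mathbf{u},\mathbf{v}))=\min(g(\mathbf{u}),g(\mathbf{v}))$. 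Uniqueness uses the same device: if $(h,g')$ is also an adjunction, then $\mathbf{u}\leq g'(\mathbf{v})$ iff $h(\mathbf{u})\leq\mathbf{v}$ iff $\mathbf{u}\leq g(\mathbf{v})$ for all $\mathbf{u},\mathbf{v}$, and evaluating at $\mathbf{u}=g'(\mathbf{v})$ and $\mathbf{u}=g(\mathbf{v})$ gives $g'=g$.

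I expect the only genuine obstacle to be the issue flagged in Step 1: ensuring that the supremum defining $g(\mathbf{v})$ is attained and finite, which is precisely where the absence of a top element in $\mathbb{T}$ makes itself felt. The clean resolution is to perform the whole argument in the completed lattice $\overline{\mathbb{T}}^{\,n}$, where dilations preserve arbitrary suprema by definition, and then to observe that for the dilations relevant here — those represented by max-plus matrix multiplication $\mathbf{u}\mapsto\mathbf{A}\boxplus\mathbf{u}$ with entries in $\mathbb{T}$, as in the discussion preceding the theorem — the adjoint is given explicitly by $g(\mathbf{v})=(-\mathbf{A}^{\top})\boxplus'\mathbf{v}$, which never equals $+\infty$, so $g$ restricts to a map $\mathbb{T}^m\to\mathbb{T}^n$. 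Everything else is routine order-theoretic bookkeeping.
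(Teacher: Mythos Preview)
The paper does not actually prove this theorem; it is quoted as a result from \cite{maragos2019tropical} and used as a black box, so there is no proof in the paper to compare yours against. The only argument the paper gives in this vicinity is the short computation immediately following the theorem, which verifies directly that for the specific dilation $\mathbf{u}\mapsto\mathbf{A}\boxplus\mathbf{u}$ the adjoint erosion is $\mathbf{v}\mapsto(-\mathbf{A}^\top)\boxplus'\mathbf{v}$; that computation is a special case of your Step~2.

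Your proposal is the standard residuation/Galois-connection argument and is correct in outline. You have also correctly identified the only genuine subtlety: a map preserving binary suprema need not preserve arbitrary suprema on a non-complete lattice, and $\mathbb{T}^n$ lacks a top element, so the supremum defining $g(\mathbf{v})$ requires justification. Your proposed fix---pass to the completed lattice $\overline{\mathbb{T}}^{\,n}$, where dilations are by definition sup-preserving, and then observe that for the matrix dilations of interest the adjoint lands back in $\mathbb{T}^n$---is exactly how this is handled in the weighted-lattice literature, and it matches the paper's own usage since the only dilations ever applied are of the form $\mathbf{A}\boxplus(\cdot)$.
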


Applying Theorem \ref{thm:dilation_erosion} to max-plus matrix multiplication map $\mathbf{u}\mapsto \mathbf{A} \boxplus \mathbf{u}$ gives that the unique erosion to make an adjunct pair is the map $\mathbf{v} \mapsto (-\mathbf{A}^\top)\boxplus'\mathbf{v}$. This follows because $\max_{1\leq j\leq n}(u_j + a_{ij}) \leq v_i$ for all $1\leq i\leq m$ if and only if $u_j + a_{ij}\leq v_i$ for all $i,j$, which happens if and only if $u_j \leq \min_{1\leq i\leq m}(v_i - a_{ij})$ for all $j$. So, if $\mathbf{A}\boxplus \mathbf{u} \leq \mathbf{v}$, then $\mathbf{u}\leq (-\mathbf{A}^\top )\boxplus'\mathbf{v}$.

\begin{theorem}[\cite{cuninghame-green_minimax_1979}]\label{thm:opt_sol}

Let $\mathbf{A} \in \mathbb{T}^{m\times n}$ and $\mathbf{b} \in \mathbb{T}^m$.

\begin{itemize}
\item For any $\ell = 1,2,3,\ldots$, the optimal solution to

\begin{equation}\label{eq:subsol_approx}
\arg\min_{\mathbf{u}}\|\mathbf{A}\boxplus \mathbf{u} - \mathbf{b}\|_\ell \quad \mathrm{s.t.} \quad \mathbf{A}\boxplus \mathbf{u} \leq \mathbf{b}
\end{equation}
 is $\hat{\mathbf{u}} = (-\mathbf{A}^\top)\boxplus'\mathbf{b}$.

\item The optimal solution to 

\begin{equation}\label{eq:inf_norm_sol}
\arg\min_{\mathbf{u}}\|\mathbf{A}\boxplus \mathbf{u} - \mathbf{b}\|_\infty 
\end{equation}
 is $\hat{\mathbf{u}} + \frac{1}{2}\|\mathbf{A}\boxplus \hat{\mathbf{u}} - \mathbf{b}\|_\infty$, where $\hat{\mathbf{u}}$ is defined as in the previous part. 

\end{itemize}

\end{theorem}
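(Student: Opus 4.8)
The plan is to reduce everything to the adjunction noted just above the statement: with $h(\mathbf{u}) := \mathbf{A}\boxplus\mathbf{u}$ and $g(\mathbf{v}) := (-\mathbf{A}^\top)\boxplus'\mathbf{v}$, Theorem~\ref{thm:dilation_erosion} says $(h,g)$ is an adjunction, so $\hat{\mathbf{u}} = g(\mathbf{b}) = \max\{\mathbf{u} \mid h(\mathbf{u})\leq\mathbf{b}\}$. I will use two elementary consequences throughout: $h$ is order preserving (immediate from being a dilation, since $h(\mathbf{v}) = h(\max(\mathbf{u},\mathbf{v})) = \max(h(\mathbf{u}),h(\mathbf{v}))\geq h(\mathbf{u})$ whenever $\mathbf{u}\leq\mathbf{v}$), and $h(g(\mathbf{v}))\leq\mathbf{v}$ for every $\mathbf{v}$ (apply the adjunction equivalence to the pair $\mathbf{u} = g(\mathbf{v})$, $\mathbf{v}$). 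For the first bullet, feasibility of $\hat{\mathbf{u}}$ is precisely $h(g(\mathbf{b}))\leq\mathbf{b}$. For optimality, take any feasible $\mathbf{u}$, i.e.\ $h(\mathbf{u})\leq\mathbf{b}$; the adjunction gives $\mathbf{u}\leq g(\mathbf{b}) = \hat{\mathbf{u}}$, and applying $h$ yields $h(\mathbf{u})\leq h(\hat{\mathbf{u}})\leq\mathbf{b}$. Componentwise this says $0\leq b_i - h(\hat{\mathbf{u}})_i\leq b_i - h(\mathbf{u})_i$, so the two residual vectors are nonnegative with $\mathbf{b}-h(\hat{\mathbf{u}})$ dominated entrywise by $\mathbf{b}-h(\mathbf{u})$; since the $\ell$-norm is monotone on the nonnegative orthant for every $\ell\geq 1$, $\|\mathbf{b}-h(\hat{\mathbf{u}})\|_\ell\leq\|\mathbf{b}-h(\mathbf{u})\|_\ell$, which is the claim.

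For the second bullet, set $\mathbf{r} := \mathbf{b} - h(\hat{\mathbf{u}})$ (entrywise nonnegative by the above), $\mu := \tfrac12\|\mathbf{r}\|_\infty = \tfrac12\max_i r_i$, and let $\mathbf{u}^\star := \hat{\mathbf{u}} + \mu\mathbf{1}$. The structural fact driving this part is the translation identity $h(\mathbf{u}+c\mathbf{1}) = h(\mathbf{u}) + c\mathbf{1}$ for scalars $c$, valid because each coordinate of $h(\mathbf{u})$ is a maximum of terms each containing exactly one entry of $\mathbf{u}$. Consequently the residual at $\mathbf{u}^\star$ is $\mathbf{b} - h(\hat{\mathbf{u}}) - \mu\mathbf{1} = \mathbf{r} - \mu\mathbf{1}$, whose entries lie in $[-\mu,\mu]$, so the loss at $\mathbf{u}^\star$ equals $\mu$. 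For the matching lower bound, fix an arbitrary $\mathbf{u}$ and put $L := \|h(\mathbf{u}) - \mathbf{b}\|_\infty$; then $h(\mathbf{u})\leq\mathbf{b}+L\mathbf{1}$, i.e.\ $h(\mathbf{u}-L\mathbf{1})\leq\mathbf{b}$, so $\mathbf{u}-L\mathbf{1}$ is feasible for the constrained problem and hence $\mathbf{u}-L\mathbf{1}\leq\hat{\mathbf{u}}$. Applying $h$ and the translation identity gives $h(\mathbf{u})\leq h(\hat{\mathbf{u}})+L\mathbf{1}$, that is $b_i - h(\mathbf{u})_i\geq r_i - L$ for all $i$; combined with $L\geq b_i - h(\mathbf{u})_i$ this forces $L\geq r_i - L$ for every $i$, whence $2L\geq\max_i r_i = 2\mu$, i.e.\ $L\geq\mu$. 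So $\mu$ is optimal and $\mathbf{u}^\star$ achieves it; moreover the same chain with $L=\mu$ shows every minimizer $\mathbf{u}$ satisfies $\mathbf{u}\leq\mathbf{u}^\star$, so $\mathbf{u}^\star$ is even the greatest minimizer.

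The one genuinely load-bearing idea --- and the step I expect to need the most care --- is the reduction used for the lower bound: the observation that ``$\mathbf{u}$ has unconstrained $\ell^\infty$-error at most $L$'' is the same as ``the translate $\mathbf{u}-L\mathbf{1}$ is feasible for $h(\cdot)\leq\mathbf{b}$'', which is what lets the already-proven constrained optimality of $\hat{\mathbf{u}} = g(\mathbf{b})$ be brought to bear. The remainder is monotonicity bookkeeping, plus the routine caveat about $-\infty$: one should either assume $\mathbf{A}$ has no identically-$-\infty$ row or column and $\mathbf{b}\in\bbR^m$, or invoke the standard min-plus convention for $-\infty$ so that $g(\mathbf{b})$, the residuals, and the translation identity all remain well defined.
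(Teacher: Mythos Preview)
Your argument for the first bullet is essentially the paper's own: both use that $h(\mathbf{u})=\mathbf{A}\boxplus\mathbf{u}$ is order-preserving (from being a dilation), that $\hat{\mathbf{u}}=g(\mathbf{b})$ is feasible via $h(g(\mathbf{b}))\leq\mathbf{b}$, and that any feasible $\mathbf{u}$ satisfies $\mathbf{u}\leq\hat{\mathbf{u}}$ by the adjunction, whence the residual vectors are ordered componentwise and the $\ell$-norm inequality follows from monotonicity on the nonnegative orthant. The paper, however, explicitly presents only a sketch of the first bullet and does not prove the second bullet at all, deferring instead to the cited reference \cite{cuninghame-green_minimax_1979}. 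Your proof of the second bullet---the translation identity $h(\mathbf{u}+c\mathbf{1})=h(\mathbf{u})+c\mathbf{1}$, the direct verification that $\mathbf{u}^\star$ attains loss $\mu$, and the lower bound obtained by noting that $\mathbf{u}-L\mathbf{1}$ is feasible for the constrained problem whenever $\|h(\mathbf{u})-\mathbf{b}\|_\infty\leq L$---is correct and is the standard argument; it simply supplies what the paper leaves to the literature, and your closing remark that $\mathbf{u}^\star$ is the greatest minimizer is a small bonus not stated there.
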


We sketch a proof of the first part of Theorem \ref{thm:opt_sol} to demonstrate how the algebraic structure on $\mathbb{T}^n$ interacts with optimization. In particular, we use the algebraic structure to demonstrate why the solution to \eqref{eq:subsol_approx} is independent of $\ell$. 

\begin{proof}
Note that because max-plus matrix-vector multiplication is a dilation, we have that if $\mathbf{u},\mathbf{v} \in \mathbb{T}^n$ have $\mathbf{u} \leq \mathbf{v}$ then $\mathbf{A} \boxplus \mathbf{u} \leq \mathbf{A}\boxplus \mathbf{v}$. Now, if $\mathbf{u}$ is feasible to \eqref{eq:subsol_approx}, then $\mathbf{u} \leq (-\mathbf{A})^\top \boxplus' \mathbf{b} = \hat{\mathbf{u}}$. This implies that $\mathbf{A} \boxplus \mathbf{u} \leq \mathbf{A} \boxplus \hat{\mathbf{u}} \leq \mathbf{b}$ and therefore for each component $i = 1,2,\ldots, m$, it must be the case that $\left(\mathbf{b} - \mathbf{A}\boxplus \mathbf{u}\right)_i\geq \left(\mathbf{b} - \mathbf{A}\boxplus \hat{\mathbf{u}}\right)_i$. So, for any finite $\ell$ and any feasible $\mathbf{u}$, $\|\mathbf{A}\boxplus \mathbf{u} - \mathbf{b}\|_\ell \geq \|\mathbf{A}\boxplus \hat{\mathbf{u}} - \mathbf{b}\|_\ell$.
\end{proof}

Theorem \ref{thm:opt_sol} allows us to solve the tropical polynomial regression problem. Given data points $(\mathbf{x}^{(1)},y^{(1)}),\allowbreak  (\mathbf{x}^{(2)},y^{(2)}),  \ldots,  (\mathbf{x}^{(N)},y^{(N)}) \in \bbR^n\times \mathbb{R}$ and a finite subset $W \subseteq \mathbb{Z}^n_{\geq 0}$, set $\mathbf{X}$ to be the $n \times |W|$ matrix with $(i,\mathbf{w})$ entry $X_{i,\mathbf{w}} = \mathbf{w}^\top \mathbf{x}^{(i)}$. This is the tropical analog of a Vandermonde matrix. Then, if $\mathbf{y}$ is the vector $\begin{bmatrix}y^{(1)},y^{(2)},\ldots, y^{(N)} \end{bmatrix}^\top$,  the tropical polynomial $p$ which minimizes $\max_{1\leq i\leq N}|p(\mathbf{x}^{(i)})-y^{(i)}|$ is 

\begin{equation}\label{eq:opt_poly}
    p(\mathbf{x}) = \max_{\mathbf{w} \in W}(p_{\mathbf{w}} + \mathbf{w}^\top \mathbf{x}),
\end{equation}
 where the vector of coefficients is

\begin{equation}\label{eq:opt_coeffs}
\mathbf{p} = (p_{\mathbf{w}})_{\mathbf{w}\in W} = (-\mathbf{X}^\top)\boxplus'\mathbf{y} + \frac{1}{2}\left\|\mathbf{X}\boxplus((-\mathbf{X}^\top)\boxplus' \mathbf{y}) - \mathbf{y}\right\|_{\infty}.
\end{equation}

\paragraph{Variants of Tropical Polynomial Regression}

In addition to the closed form solution of the tropical regression problem in the $\infty$-norm described above, other variants of tropical polynomial regression have been explored recently. In \cite{HOOK2019maxplus2norm}, the authors present two algorithms to solve the 2-norm max plus regression problem. The first involves a brute force search over sparsity patterns of solution vectors and the second involves a variant of Newton's method. In \cite{magnani_convex_2009}, the authors present a method for convex piecewise linear regression with the 2-norm loss which involves iteratively partitioning the input data and fitting affine functions to each partition. Finally, in \cite{tsiamis2019sparsity,tsilivis2022towardSparsityWeighted}, the authors leverage the weighted lattice framework above to find sparse solutions to the problem \eqref{eq:inf_norm_sol}. Specifically, the authors present a greedy algorithm to find a sparse solution to \eqref{eq:subsol_approx} for $p < \infty$ then shift the finite entries by half of the infinity norm of the residual. 

\subsection{Relations to ReLU Neural Networks}

This section fixes notation for and  briefly overviews the relationship between neural networks and tropical algebraic objects. We closely follow the presentation in \cite{zhang2018tropical}. A recent survey on tropical algebraic techniques for machine learning is  \cite{Maragos2021TropML}.

An \emph{$L$-layer neural network} with \emph{ReLU} activation functions is a function $\nu:\bbR^n \to \bbR$ that can be expressed as a composition of functions

\[\nu =  \rho^{(L)} \circ \sigma^{(L-1)} \circ \rho^{(L-1)} \circ \sigma^{(L-2)}\circ \cdots \circ \sigma^{(1)}\circ \rho^{(1)},\]
 where $\sigma^{(\ell)}(\mathbf{x}) = \max(\mathbf{x},0)$ is the ReLU activation function and $\rho^{(\ell)}(\mathbf{x}) = \mathbf{A}^{(\ell)}\mathbf{x}  + \mathbf{b}^{(\ell)}$ is affine. The matrix $\mathbf{A}^{(\ell)}$ and the vector $\mathbf{b}^{(\ell)}$ encode the \emph{weights} and \emph{bias} of layer $\ell$, respectively. ReLU neural networks are continuous and piecewise linear by construction. Under assumptions on the entries of the $\mathbf{A}^{(\ell)}$, they can additionally be written as tropical rational functions. 

\begin{theorem}[\cite{zhang2018tropical}]\label{thm:fun_equiv}
The following classes of functions are the same

\begin{enumerate}
    \item[(i)] Tropical rational functions
    \item[(ii)] Continuous piecewise linear functions with integer coefficients
    \item[(iii)] ReLU neural networks with integer weights
\end{enumerate}

\end{theorem}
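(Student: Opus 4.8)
The plan is to prove the equivalence of (i), (ii), and (iii) by a cycle of inclusions, establishing (iii) $\subseteq$ (i) $\subseteq$ (ii) $\subseteq$ (iii), with the last being the most substantive step. Throughout, ``integer coefficients'' for a piecewise linear function means that every linear piece has the form $\mathbf{a}^\top \mathbf{x} + b$ with $\mathbf{a} \in \mathbb{Z}^n$ (the constant $b$ may be real), which matches the slopes appearing in tropical monomials $\mathbf{x}^{\odot \mathbf{w}}$.

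For (iii) $\subseteq$ (i): I would argue by induction on the number of layers $L$. The base case is that an affine map $\mathbf{a}^\top \mathbf{x} + b$ with $\mathbf{a} \in \mathbb{Z}^n$ is a single tropical monomial, hence a (very simple) tropical rational function. For the inductive step, the key closure properties are that the class of tropical rational functions is closed under (a) addition, since $(p_1 - q_1) + (p_2 - q_2) = (p_1 + p_2) - (q_1 + q_2)$ and tropical polynomials are closed under tropical multiplication $=$ ordinary addition; (b) taking pointwise maximum of finitely many functions, since $\max(p_1 - q_1, p_2 - q_2) = \max(p_1 + q_2,\, p_2 + q_1) - (q_1 + q_2)$, and the first term is again a max of tropical polynomials hence a tropical polynomial; and (c) the ReLU $\sigma(f) = \max(f, 0)$, which is the special case of (b) with the second function identically zero. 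Composing an affine map with integer weight matrix with a vector of tropical rational functions produces, coordinatewise, an integer combination of tropical rational functions, which by (a) is again tropical rational; applying $\sigma$ preserves the class by (c). Iterating through all $L$ layers yields a tropical rational function.

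For (i) $\subseteq$ (ii): a tropical polynomial $\max_{\mathbf{w} \in W}(\mathbf{w}^\top \mathbf{x} + c_\mathbf{w})$ is manifestly continuous and piecewise linear with each piece having integer slope $\mathbf{w} \in \mathbb{Z}^n_{\geq 0}$; a difference of two such functions is continuous piecewise linear with slopes that are differences of nonnegative integer vectors, hence in $\mathbb{Z}^n$. The reverse inclusion (ii) $\subseteq$ (iii) is the main obstacle and is where I expect the real work to lie. The strategy is to show that any continuous piecewise linear $f: \mathbb{R}^n \to \mathbb{R}$ with integer-slope pieces can be written as a finite expression built from the coordinate functions, integer scalars, and the operations $+$, $\max$, and $\min$ — essentially a ``lattice'' representation — and then to realize each of $\max$ and $\min$ (and hence any such formula) by a small ReLU network with integer weights, using identities like $\max(a,b) = b + \sigma(a - b)$ and $\min(a,b) = a - \sigma(a - b)$, together with the fact that composition of integer-weight ReLU layers stays in class (iii). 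The nontrivial ingredient is the representation theorem that every continuous piecewise linear function is a finite max-of-min (or min-of-max) of affine functions drawn from the finitely many linear pieces of $f$; this is a classical fact (going back to Ovchinnikov and others), and one must check that the integrality of the slopes is inherited by the affine functions used in that representation — which holds because those affine functions are exactly the linear pieces of $f$ themselves. I would cite the relevant lattice-representation result and then spell out the $O(1)$-size integer ReLU gadgets for binary $\max$ and $\min$, noting that an $n$-ary max or min is obtained by composing binary ones, so the whole formula compiles into a finite integer-weight ReLU network; assembling these pieces closes the cycle.
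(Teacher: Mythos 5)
There is nothing in the paper to compare your argument against: Theorem \ref{thm:fun_equiv} is quoted from \cite{zhang2018tropical} and the paper gives no proof of it, so your attempt can only be judged against the cited source, which in fact argues along very similar lines (a representation theorem for continuous piecewise linear functions with integer coefficients plus explicit ReLU gadgets, the latter reappearing in this paper as \eqref{eq:poly_sum_as_ReLU}).

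On its merits, your cycle (iii) $\subseteq$ (i) $\subseteq$ (ii) $\subseteq$ (iii) is the right strategy and the key ideas are in place, but two points need patching. First, in the inductive step for (iii) $\subseteq$ (i) you only list closure under addition, maximum, and ReLU, yet an affine layer multiplies the previous layer's outputs by arbitrary integers, so you also need closure of tropical rational functions under integer scalar multiplication: for $a \geq 0$ note $a(p-q) = ap - aq$ and $ap(\mathbf{x}) = \max_{\mathbf{w}}(a\mathbf{w}^\top\mathbf{x} + a c_{\mathbf{w}})$ is again a tropical polynomial (exponents $a\mathbf{w} \in \bbZ^n_{\geq 0}$), while for $a<0$ one swaps numerator and denominator; without this the induction does not go through as written. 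Second, in compiling the max--min lattice formula into a network for (ii) $\subseteq$ (iii), the identity $\max(a,b) = b + \sigma(a-b)$ requires carrying the value $b$ across a ReLU layer, which must be done with the pass-through gadget $b = \sigma(b) - \sigma(-b)$ (weights $\pm 1$, so integrality is preserved); this is exactly the bookkeeping in \eqref{eq:poly_sum_as_ReLU} and should be stated, since a raw affine function cannot be forwarded through a ReLU layer unchanged. With those two repairs, and with the lattice-representation theorem (Ovchinnikov/Tarela--Mart\'inez type, with the observation you correctly make that the affine functions appearing are the linear pieces of $f$ and hence have integer slopes) cited precisely, your argument is a correct and essentially standard proof of the equivalence; it differs from the cited source mainly in using a max-of-min representation where \cite{zhang2018tropical} uses a difference-of-convex (difference of two max-of-affine) representation, and the two are interchangeable for this purpose.
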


The authors of \cite{zhang2018tropical} note that if $\nu$ is a neural network with nonintegral weights, then rounding weights to rational numbers and clearing denominators gives a network with integer weights. More precisely,

\begin{corollary}\label{cor:scal_rat}
If $\nu: \bbR^n \to \bbR$ is a ReLU neural network, then there is a real number $c$ and a tropical rational function $f \in \mathbb{T}(x_1,\ldots, x_n)$ such that $\nu(\mathbf{x})$ is approximated arbitrarily closely by $f(c\mathbf{x})$ for all $\mathbf{x} \in \bbR^n$. 
\end{corollary}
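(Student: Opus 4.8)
The plan is to deduce the statement from Theorem~\ref{thm:fun_equiv} by first approximating $\nu$ by a ReLU network with \emph{rational} weight matrices and then ``clearing denominators'' through a rescaling of the input. For the first part, replace each weight matrix $\mathbf{A}^{(\ell)}$ of $\nu$ by a rational matrix $\tilde{\mathbf{A}}^{(\ell)}$ obtained by rounding its entries, leaving the biases unchanged, and call the resulting network $\tilde{\nu}$. Since evaluating a network of fixed architecture at a fixed point is a continuous function of its weights, refining the rounding makes $\tilde{\nu}$ converge to $\nu$ uniformly on every compact subset of $\bbR^n$; this is the sense in which $f(c\mathbf{x})$ will approximate $\nu(\mathbf{x})$. (A genuinely global uniform bound is impossible in general: on an unbounded linear region of $\nu$ the slope is a fixed real vector, and a rational-slope function matches it only approximately, so the discrepancy grows linearly at infinity.)

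For the second part I would show that any ReLU network $\tilde{\nu}$ with rational weight matrices equals $f(c\mathbf{x})$ for some integer-weight network $f$ and some $c>0$. The key tool is the positive homogeneity $\sigma(\lambda\mathbf{z})=\lambda\sigma(\mathbf{z})$ for $\lambda>0$, which lets a positive scalar be pushed across an activation layer at the cost of rescaling the corresponding bias --- harmless, since the integer-weight networks of Theorem~\ref{thm:fun_equiv} (equivalently, tropical rational functions) constrain only the weight matrices and not the biases, mirroring the real coefficients $c_{\mathbf{w}}$ permitted in a tropical polynomial. Concretely, I would look for an $f$ whose $\ell$th hidden activation equals $t_\ell$ times that of $\tilde{\nu}$ for suitable scalars $t_1,\ldots,t_{L-1}>0$; matching the two networks layer by layer forces $\mathbf{A}_f^{(1)}=(t_1/c)\tilde{\mathbf{A}}^{(1)}$, $\mathbf{A}_f^{(\ell)}=(t_\ell/t_{\ell-1})\tilde{\mathbf{A}}^{(\ell)}$ for $2\le\ell\le L-1$, and $\mathbf{A}_f^{(L)}=\tilde{\mathbf{A}}^{(L)}/t_{L-1}$. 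Cascading from the last layer, one picks each ratio to be a common denominator of the corresponding $\tilde{\mathbf{A}}^{(\ell)}$ and finally takes $c$ to be the reciprocal of a product of common denominators; then every $\mathbf{A}_f^{(\ell)}$ is integral, the biases $\mathbf{b}_f^{(\ell)}=t_\ell\mathbf{b}^{(\ell)}$ remain real, and $\tilde{\nu}(\mathbf{x})=f(c\mathbf{x})$ exactly, so $f\in\mathbb{T}(x_1,\ldots,x_n)$ by Theorem~\ref{thm:fun_equiv}.

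Combining the two parts yields, for any prescribed accuracy on a given compact set, a scalar $c$ and a tropical rational function $f$ with $f(c\mathbf{x})$ that close to $\nu(\mathbf{x})$. I expect the second part to be the real obstacle rather than the routine rounding in the first: a single input scaling reaches only the first affine layer, so one genuinely needs the homogeneity manipulation to absorb the denominators appearing in the deeper weight matrices, and one must verify that the induced bias rescalings never break integrality --- which they cannot, precisely because biases are unconstrained in the integer-weight class. The only other subtlety is that ``approximated arbitrarily closely $\ldots$ for all $\mathbf{x}$'' should be read as uniform approximation on compact subsets of $\bbR^n$, not as a global bound.
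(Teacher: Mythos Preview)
Your proposal is correct and follows precisely the approach the paper indicates: the paper does not give a formal proof of this corollary but simply notes, citing \cite{zhang2018tropical}, that ``rounding weights to rational numbers and clearing denominators gives a network with integer weights,'' and your two steps (rational rounding for the approximation, then pushing denominators through via the positive homogeneity of $\sigma$) are exactly this sketch made rigorous. Your observation that the approximation must be read as uniform on compacta rather than globally uniform is a useful clarification that the paper leaves implicit.
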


Removing the integrality condition on the weights of $\nu$ gives the following result:

\begin{theorem}[\cite{arora2018understanding}]
If $\nu:\bbR^n \to \bbR$ is a ReLU neural network, then $\nu$ is a piecewise linear function. Conversely, if $r:\bbR^n \to \bbR$ is piecewise a piecewise linear function, then $r$ can be represented as a ReLU neural network with at most $\lceil \log_2(n+1)\rceil + 1$ layers. 
\end{theorem}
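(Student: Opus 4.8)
The plan is to treat the two directions separately: the forward direction is a short induction, and the converse rests on a classical representation theorem for continuous piecewise linear functions together with a binary-tree implementation of maxima.

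For the forward direction, I would induct on the number of layers $L$. An affine map $\rho^{(\ell)}$ is piecewise linear, and so is the ReLU map $\sigma^{(\ell)}$, since each of its coordinates is $t\mapsto\max(t,0)$. It therefore suffices to check that a composition of continuous piecewise linear maps is again continuous piecewise linear: if $g:\bbR^a\to\bbR^b$ is affine on each cell of a finite polyhedral subdivision $\cP_a$ of $\bbR^a$ and $h:\bbR^b\to\bbR^c$ is affine on each cell of a finite polyhedral subdivision $\cP_b$ of $\bbR^b$, then on the common refinement of $\cP_a$ with the preimages under the affine pieces of $g$ of the cells of $\cP_b$ — still a finite polyhedral subdivision — the composition $h\circ g$ is affine on each cell and globally continuous. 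Applying this along the layers shows $\nu$ is continuous piecewise linear.

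For the converse, the key input is the following representation, which I would either cite (for instance the generalized hinging-hyperplane theorem of Wang and Sun, or the lattice representations of Ovchinnikov) or prove by a local-selection argument: every continuous piecewise linear $r:\bbR^n\to\bbR$ can be written as a finite signed sum
\[ r(\mathbf{x}) \;=\; \sum_{i=1}^{M} s_i\,\max_{j\in S_i}\ell_{ij}(\mathbf{x}), \qquad s_i\in\{+1,-1\},\]
with each $\ell_{ij}$ affine and $|S_i|\le n+1$. Granting this, I would implement $r$ using the identity $\max(a,b)=\sigma(a-b)+\sigma(b)-\sigma(-b)$, which computes the maximum of two affine functions with one affine map feeding a single ReLU activation whose outputs are linearly recombined. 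Arranging the maximum over each $S_i$ as a balanced binary tree, the maximum of $m$ affine functions is computed with $\lceil\log_2 m\rceil$ ReLU activations: at each level, the affine map before the activation forms the pairwise differences and sign-split copies of the previous level's recombined values (permissible, since those are affine in the preceding ReLU outputs), and the affine map after the final activation performs the last recombination. Padding each $S_i$ up to $2^{\lceil\log_2(n+1)\rceil}$ entries by repetition (which does not change the maximum) makes all $M$ trees have depth exactly $\lceil\log_2(n+1)\rceil$; they run in parallel, sharing the first affine map $\rho^{(1)}$ that produces the needed affine quantities of $\mathbf{x}$, and the last affine map outputs $\sum_i s_i(\cdot)$. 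Hence the network uses $\lceil\log_2(n+1)\rceil$ ReLU activations and $\lceil\log_2(n+1)\rceil+1$ affine maps, i.e. at most $\lceil\log_2(n+1)\rceil+1$ layers.

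The main obstacle is the representation theorem with the sharp count $|S_i|\le n+1$; everything else is careful bookkeeping of widths and depths. Proved from scratch, that lemma would proceed by fixing a polyhedral subdivision on which $r$ is affine, expressing $r$ locally near each vertex as a lattice combination of the affine pieces active there, invoking a general-position/dimension argument in $\bbR^n$ to bound by $n+1$ the number of pieces needed per term, and then globalizing the local expressions into a single signed sum of maxima — the local-to-global step and the dimension bound being the delicate points.
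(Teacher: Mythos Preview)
The paper does not prove this theorem: it is stated as a cited result from \cite{arora2018understanding} with no accompanying proof, so there is no ``paper's own proof'' to compare against. Your proposal is, in fact, essentially the argument given in that reference: the forward direction by closure of continuous piecewise linear maps under composition, and the converse by invoking the Wang--Sun/Ovchinnikov representation of any continuous piecewise linear $r$ as a signed sum of maxima of at most $n+1$ affine functions, then realizing each such maximum with a depth-$\lceil\log_2(n+1)\rceil$ binary tree of pairwise maxes implemented via $\max(a,b)=\sigma(a-b)+\sigma(b)-\sigma(-b)$.

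Your bookkeeping is correct under the paper's layer convention (an $L$-layer network has $L$ affine maps and $L-1$ activations), and you correctly flag the representation theorem with the sharp $n+1$ bound as the only nontrivial ingredient. One small caution in your last paragraph: the sketch you give for proving that lemma from scratch is too vague to stand on its own --- the local-to-global step is genuinely delicate and the literature proofs (e.g.\ Wang--Sun) are not short --- so in practice you would cite rather than reprove it, exactly as Arora et al.\ do.
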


\section{Alternating Method For Tropical Rational Regression}\label{sec:Algorithm}

We adapt the polynomial regression method described in Section \ref{sec:Lattice_Regression} to fit tropical rational functions to a dataset. Recall that a tropical rational function is a function of the form $f(\mathbf{x}) := p(\mathbf{x})-q(\mathbf{x})$, where $p$ and $q$ are tropical polynomials. So, for some finite subset $W \subseteq \bbZ^n_{\geq 0}$,

\[f(\mathbf{x}) = p(\mathbf{x}) - q(\mathbf{x}) = \max_{\mathbf{w} \in W}({\bf w}^\top {\bf x} + p_\mathbf{w}) - \max_{\mathbf{w} \in W}({\bf w}^\top \mathbf{x} + q_\mathbf{w}).\]

Given a set of points $\mathbf{x}^{(1)},\mathbf{x}^{(2)},\ldots, \mathbf{x}^{(N)} \in \bbR^n$, set $\mathbf{X}$ to be the matrix whose rows are indexed by $i \in \{1,\ldots, N\}$ and columns are indexed by $\mathbf{w} \in W$ with $\mathbf{X}_{(i,\mathbf{w})} = \mathbf{w}^\top \mathbf{x}^{(i)}$.  Evaluation of the tropical rational function $f$ at the points $\mathbf{x}^{(i)}$ is then given by 

\begin{equation}\label{eq:rational_as_mvps}
\begin{bmatrix} f(\mathbf{x}^{(1)}) & f(\mathbf{x}^{(2)}) & \cdots & f(\mathbf{x}^{(N)})\end{bmatrix}^\top = \mathbf{X} \boxplus \mathbf{p} - \mathbf{X} \boxplus \mathbf{q},
\end{equation}
 where $\mathbf{p} = (p_\mathbf{w})_{\mathbf{w} \in W}$ and $\mathbf{q} = (q_\mathbf{w})_{\mathbf{w} \in W}$ are the vectors of coefficients of $p$ and $q$. Using the representation \eqref{eq:rational_as_mvps}, it follows that we can rewrite the problem \eqref{eq:ProblemDef} as

\begin{equation}\label{eq:Opt_problem}
  \arg\min_{f \in \mathbb{T}(\mathbf{x})_W }\left\|\begin{bmatrix}f(\mathbf{x}^{(1)}) & f(\mathbf{x}^{(2)}) & \cdots & f(\mathbf{x}^{(N)})\end{bmatrix}^\top - \mathbf{y}\right\|_{\infty} = \arg\min_{\mathbf{p},\mathbf{q}}\left\| \mathbf{X} \boxplus \mathbf{p} - \mathbf{X} \boxplus \mathbf{q} - \mathbf{y}\right\|_{\infty}.  
\end{equation}

For fixed $\mathbf{q}$, the problem $\arg \min_{\mathbf{p}}\|\mathbf{X} \boxplus \mathbf{p} - (\mathbf{X} \boxplus \mathbf{q} + \mathbf{y})\|_\infty$ is a tropical polynomial regression problem. By Theorem \ref{thm:opt_sol}, this problem has the analytical solution 

\[\mathbf{p}_{*}(\mathbf{q}) = (-\mathbf{X}^\top) \boxplus' (\mathbf{X} \boxplus \mathbf{q} + \mathbf{y}) + \frac{1}{2}\left\|\mathbf{X} \boxplus \left((-\mathbf{X}^\top) \boxplus' (\mathbf{X} \boxplus \mathbf{q} + \mathbf{y})\right) - (\mathbf{X} \boxplus \mathbf{q} + \mathbf{y}) \right\|_{\infty}.\]
 Similarly, for fixed $\mathbf{p}$, the problem $\arg\min_{\mathbf{q}}\|\mathbf{X}\boxplus \mathbf{q} - (\mathbf{X} \boxplus \mathbf{p} - \mathbf{y})\|$ has the analytical solution

\[\mathbf{q}_{*}(\mathbf{p}) = (-\mathbf{X}^\top) \boxplus' (\mathbf{X} \boxplus \mathbf{p} - \mathbf{y}) + \frac{1}{2}\left\|\mathbf{X} \boxplus \left((-\mathbf{X}^\top) \boxplus' (\mathbf{X} \boxplus \mathbf{p} - \mathbf{y})\right) - (\mathbf{X} \boxplus \mathbf{p} - \mathbf{y}) \right\|_{\infty}.\]

Moreover, these analytical solutions can be found quickly, as they rely only on max-plus and min-plus matrix-vector products and do not need to solve a linear system. We exploit this to search over the space of tropical rational functions by alternating between fitting the numerator polynomial and the denominator polynomial. This method is summarized below as Algorithm \ref{alg:alt_fit}. 

\begin{algorithm}[t]
\caption{Alternating fit for tropical rational functions \label{alg:alt_fit}}
\KwIn{Dataset $\cD = (\mathbf{x}^{(i)},y^{(i)})_{i = 1}^{N} \subseteq \bbR^{n} \times  \bbR$,\\ Set of permissible exponents $W \subseteq \bbZ_{\geq 0}^n$, \\ Maximum number of iterations $k_{\max}$}
\KwOut{Vectors $\mathbf{p}$ and $\mathbf{q}$ of coefficients of tropical polynomials $p,q \in \mathbb{T}[\mathbf{x}]_W$ such that $p(\mathbf{x}^{(i)}) - q(\mathbf{x}^{(i)}) \approx y^{(i)}$ }
\smallskip

\smallskip
Set $\mathbf{X} \in \bbR^{N \times |W|}$ to be the matrix with entries $\mathbf{X}_{i,\mathbf{w}} = (\mathbf{w}^\top \mathbf{x}^{(i)})$\;
$\mathbf{p}^0,\mathbf{q}^0\gets -\infty, \mathbf{q}^0_{\mathbf{0}} \gets -\text{mean}(\mathbf{y})$\;
\For{$k \leq k_{max}$}{
    $\mathbf{p}^k \gets \arg \min_{\mathbf{p}}\|\mathbf{X} \boxplus \mathbf{p} - \mathbf{X}\boxplus \mathbf{q}^{k-1} - \mathbf{y}\|_{\infty}$\;
    $\mathbf{q}^k \gets \arg \min_\mathbf{q}\|\mathbf{X}\boxplus \mathbf{p}^k - \mathbf{X}\boxplus \mathbf{q} - \mathbf{y}\|_{\infty}$\;
}
$\mathbf{p} \gets \mathbf{p}^{k_{\max}}$; $\mathbf{q} \gets \mathbf{q}^{k_{\max}}$
\end{algorithm}

While Algorithm \ref{alg:alt_fit} is defined for general choices of $W \subseteq \bbZ^n_{\geq 0}$, our implementation takes $W$ to be of the form $W = \{(w_1,w_2,\ldots, w_n) \in \bbZ^n_{\geq 0} : w_i \leq d_i\}$ for some $\mathbf{d} = (d_1, d_2,\ldots, d_n)^\top \in \bbZ^n_{\geq 0}$. In this case, $|W|$ becomes very large if $\mathbf{d}$ has large entries or if $n$ is large. In \cite{maragos2019tropical}, the authors discuss choosing $W$ by clustering approximated gradients from the data to reduce the number of parameters used in fitting tropical polynomials. The choice of initialization is such that $f$ is initialized to the constant function $f(x_1,\ldots, x_n) = \text{mean}(\mathbf{y})$.

As a step towards understanding convergence properties of Algorithm \ref{alg:alt_fit}, we show that the error at each iteration is nonincreasing. 

\begin{proposition}
\label{prop:error_nonincreasing}
The error $e^k = \|\mathbf{X}\boxplus \mathbf{p}^k - \mathbf{X} \boxplus \mathbf{q}^{k} - \mathbf{y}\|_{\infty}$ is nonincreasing. 
\end{proposition}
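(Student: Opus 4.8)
The plan is to use the standard monotonicity argument for alternating (block coordinate) minimization: each of the two updates in iteration $k$ solves its subproblem \emph{globally}, so feeding in the previous iterate as a feasible candidate shows the $\ell^\infty$ loss cannot increase across either substep. Before running that argument I would note the one bit of bookkeeping: the $\mathbf{q}$-update, written as $\arg\min_{\mathbf{q}}\|\mathbf{X}\boxplus\mathbf{p}^k-\mathbf{X}\boxplus\mathbf{q}-\mathbf{y}\|_\infty$, is genuinely an instance of the tropical polynomial regression problem covered by Theorem~\ref{thm:opt_sol}, since $\|\mathbf{v}\|_\infty=\|-\mathbf{v}\|_\infty$ lets us rewrite it as $\arg\min_{\mathbf{q}}\|\mathbf{X}\boxplus\mathbf{q}-(\mathbf{X}\boxplus\mathbf{p}^k-\mathbf{y})\|_\infty$. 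Thus both the $\mathbf{p}$-step and the $\mathbf{q}$-step return true minimizers of their respective objectives, which is exactly what the competitor argument needs.

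Concretely, I would introduce the intermediate quantity
\[
\tilde e^{\,k}:=\bigl\|\mathbf{X}\boxplus\mathbf{p}^{k}-\mathbf{X}\boxplus\mathbf{q}^{k-1}-\mathbf{y}\bigr\|_\infty .
\]
Since $\mathbf{p}^k$ minimizes $\mathbf{p}\mapsto\|\mathbf{X}\boxplus\mathbf{p}-\mathbf{X}\boxplus\mathbf{q}^{k-1}-\mathbf{y}\|_\infty$ and $\mathbf{p}^{k-1}$ is one admissible choice, we get $\tilde e^{\,k}\le\|\mathbf{X}\boxplus\mathbf{p}^{k-1}-\mathbf{X}\boxplus\mathbf{q}^{k-1}-\mathbf{y}\|_\infty=e^{k-1}$. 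Likewise, $\mathbf{q}^k$ minimizes $\mathbf{q}\mapsto\|\mathbf{X}\boxplus\mathbf{p}^{k}-\mathbf{X}\boxplus\mathbf{q}-\mathbf{y}\|_\infty$ and $\mathbf{q}^{k-1}$ is admissible, so $e^{k}=\|\mathbf{X}\boxplus\mathbf{p}^{k}-\mathbf{X}\boxplus\mathbf{q}^{k}-\mathbf{y}\|_\infty\le\tilde e^{\,k}$. Chaining the two inequalities yields $e^{k}\le\tilde e^{\,k}\le e^{k-1}$, which is the assertion.

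I do not expect a genuine obstacle: the real content is Theorem~\ref{thm:opt_sol}, which guarantees that each substep attains its global minimum so that the previous iterate is a legitimate competitor; everything else is the one-line sandwich above. The only points deserving a sentence of care are (i) the sign flip identifying the $\mathbf{q}$-substep with tropical polynomial regression, and (ii) the degenerate first iteration, where $\mathbf{p}^0=-\infty$ forces $\mathbf{X}\boxplus\mathbf{p}^0=-\infty$ componentwise and hence $e^0=+\infty$, so $e^1\le e^0$ holds trivially; from the closed forms one further checks $\mathbf{p}^1,\mathbf{q}^1$ are finite (as $\mathbf{X}$ and $\mathbf{y}$ are finite and $\mathbf{X}\boxplus\mathbf{q}^0=-\mathrm{mean}(\mathbf{y})\mathbf{1}$), so $e^k<\infty$ for all $k\ge 1$ and the sequence $e^1\ge e^2\ge\cdots$ is well defined.
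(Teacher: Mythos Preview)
Your argument is correct and is essentially the paper's own proof: both use the standard alternating-minimization monotonicity step, bounding the intermediate loss $\|\mathbf{X}\boxplus\mathbf{p}^{k}-\mathbf{X}\boxplus\mathbf{q}^{k-1}-\mathbf{y}\|_\infty$ above by $e^{k-1}$ (optimality of the $\mathbf{p}$-update) and below by $e^{k}$ (optimality of the $\mathbf{q}$-update). Your extra remarks on the sign flip for the $\mathbf{q}$-step and the $k=0$ edge case are sound but not needed for the paper's version.
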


\begin{proof}
We show that $e^{k+1}\leq e^k$ for any $k$. By construction, $\mathbf{p}^{k+1}$ satisfies 

\[\|\mathbf{X}\boxplus \mathbf{p}^{k+1} - \mathbf{X}\boxplus \mathbf{q}^{k} - \mathbf{y}\|_{\infty} \leq \|\mathbf{X}\boxplus \mathbf{p}^{k} - \mathbf{X}\boxplus \mathbf{q}^{k} - \mathbf{y}\|_{\infty} = e^k.\]
 Similarly, 

\[e^{k+1} = \|\mathbf{X}\boxplus \mathbf{p}^{k+1} - \mathbf{X}\boxplus \mathbf{q}^{k+1} - \mathbf{y}\|_{\infty} \leq \|\mathbf{X}\boxplus \mathbf{p}^{k+1} - \mathbf{X}\boxplus \mathbf{q}^{k} - \mathbf{y}\|_{\infty}.\]
 It then follows that $e^{k+1} \leq e^k$.\end{proof}

The decrease in error between iterations is bounded by a constant multiple of the norm of the update step. 

\begin{proposition}\label{prop:error_bounded_by_updates}
Let $\eta^k = \left\|\begin{bmatrix}\mathbf{p}^{k+1} & \mathbf{q}^{k+1}\end{bmatrix}^\top - \begin{bmatrix}\mathbf{p}^{k} & \mathbf{q}^{k}\end{bmatrix}^\top\right\|_\infty$. Then, the change in error between iterations $e^k - e^{k+1}$ is bounded:

\[e^k - e^{k+1} \leq 2\eta^k.\]
\end{proposition}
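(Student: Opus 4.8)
The plan is to reduce everything to the triangle inequality for $\|\cdot\|_\infty$ together with the fact that max-plus matrix-vector multiplication is nonexpansive in the $\infty$-norm. Concretely, the one ingredient not yet recorded in the excerpt is the elementary estimate
\[
\|\mathbf{A}\boxplus\mathbf{u} - \mathbf{A}\boxplus\mathbf{v}\|_\infty \;\leq\; \|\mathbf{u}-\mathbf{v}\|_\infty \qquad\text{for all }\mathbf{A}\in\mathbb{T}^{m\times n},\ \mathbf{u},\mathbf{v}\in\bbR^n,
\]
which follows row by row from the general inequality $\bigl|\max_j a_j - \max_j b_j\bigr|\leq \max_j |a_j-b_j|$ applied to $a_j = A_{ij}+u_j$, $b_j = A_{ij}+v_j$. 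I would state and prove this as a one-line lemma (or inline remark) first.

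Next I would write $e^k$ using the triangle inequality with the iterate-$(k+1)$ residual inserted:
\[
e^k = \bigl\|(\mathbf{X}\boxplus\mathbf{p}^{k+1} - \mathbf{X}\boxplus\mathbf{q}^{k+1} - \mathbf{y}) + (\mathbf{X}\boxplus\mathbf{p}^{k} - \mathbf{X}\boxplus\mathbf{p}^{k+1}) - (\mathbf{X}\boxplus\mathbf{q}^{k} - \mathbf{X}\boxplus\mathbf{q}^{k+1})\bigr\|_\infty,
\]
so that
\[
e^k \leq e^{k+1} + \|\mathbf{X}\boxplus\mathbf{p}^{k} - \mathbf{X}\boxplus\mathbf{p}^{k+1}\|_\infty + \|\mathbf{X}\boxplus\mathbf{q}^{k} - \mathbf{X}\boxplus\mathbf{q}^{k+1}\|_\infty.
\]
Applying the nonexpansiveness lemma to each of the last two terms bounds them by $\|\mathbf{p}^k-\mathbf{p}^{k+1}\|_\infty$ and $\|\mathbf{q}^k-\mathbf{q}^{k+1}\|_\infty$ respectively, and each of these is at most $\eta^k$ since both $\mathbf{p}^k-\mathbf{p}^{k+1}$ and $\mathbf{q}^k-\mathbf{q}^{k+1}$ are subvectors of $[\mathbf{p}^{k+1}\ \mathbf{q}^{k+1}]^\top-[\mathbf{p}^{k}\ \mathbf{q}^{k}]^\top$. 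Hence $e^k \leq e^{k+1} + 2\eta^k$, which is the claim.

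There is essentially no hard step here; the only thing to be slightly careful about is the initialization $k=0$, where $\mathbf{p}^0$ (and some entries of $\mathbf{q}^0$) equal $-\infty$, so $\eta^0$ and the intermediate quantities may be $+\infty$ — in that case the asserted inequality holds vacuously, and for $k\geq 1$ all iterates are finite so the manipulations above are valid in $\bbR$. I would add one sentence to dispatch this edge case. The main (minor) obstacle is simply to make the nonexpansiveness statement precise and to track that the two coefficient-block differences are each dominated by $\eta^k$.
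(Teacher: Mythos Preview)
Your proof is correct and is essentially the same argument as the paper's: both rest on the nonexpansiveness of max-plus matrix--vector multiplication in $\|\cdot\|_\infty$ (which the paper establishes via the coordinatewise sandwich $p^k(\mathbf{x}^{(i)})-\eta^k\leq p^{k+1}(\mathbf{x}^{(i)})\leq p^k(\mathbf{x}^{(i)})+\eta^k$) followed by a triangle/reverse-triangle inequality. The only cosmetic difference is that the paper selects an index $\ell$ realizing $e^k$ and works pointwise, whereas you apply the triangle inequality directly at the vector level; your packaging is slightly cleaner but not a different route.
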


\begin{proof}

First, note that for each $\mathbf{w} \in W$, 

\[p_\mathbf{w}^k - \eta^k \leq p_\mathbf{w}^{k+1} \leq p_\mathbf{w}^{k} + \eta^k \quad \text{ and } \quad  q_\mathbf{w}^k - \eta^k \leq q_\mathbf{w}^{k+1} \leq q_\mathbf{w}^{k} + \eta^k.\]

Because for fixed $i \in \{1,2,\ldots, N\}$,  $\max_{\mathbf{w} \in W}(p_\mathbf{w}^k + \mathbf{w}^\top \mathbf{x}^{(i)}) -\eta^k = \max_{\mathbf{w} \in W}(p_\mathbf{w}^k -\eta^k + \mathbf{w}^\top \mathbf{x}^{(i)})$, this in turn implies that

\[\max_{\mathbf{w} \in W}(p_\mathbf{w}^k + \mathbf{w}^\top \mathbf{x}^{(i)})  -\eta^k\leq \max_{\mathbf{w} \in W}(p_\mathbf{w}^{k+1} + \mathbf{w}^\top \mathbf{x}^{(i)}) \leq \max_{\mathbf{w} \in W}(p_\mathbf{w}^k + \mathbf{w}^\top \mathbf{x}^{(i)}) + \eta^k. \]
 The analogous statement holds with $\mathbf{q}^k$ replacing $\mathbf{p}^k$. 

Let $\ell \in \{1,2,\ldots, N\}$ be such that $e^{k} = |\max_{\mathbf{w} \in W}(p_\mathbf{w}^k + \mathbf{w}^\top \mathbf{x}^{(\ell)}) - \max_{\mathbf{w} \in W}(q_\mathbf{w}^k + \mathbf{w}^\top \mathbf{x}^{(\ell)}) - y^{(\ell)}|$. Now,

\[\begin{aligned}
e^k - e^{k+1} &= |p^k(\mathbf{x}^{(\ell)}) - q^k(\mathbf{x}^{(\ell)}) - y^{(\ell)}| - \max_{i = 1,2,\ldots, N}|p^{k+1}(\mathbf{x}^{(i)}) - q^{k+1}(\mathbf{x}^{(i)}) - y^{(i)}|\\
&\leq |p^k(\mathbf{x}^{(\ell)}) - q^k(\mathbf{x}^{(\ell)}) - y^{(\ell)}| - |p^{k+1}(\mathbf{x}^{(\ell)}) - q^{k+1}(\mathbf{x}^{(\ell)}) - y^{(\ell)}|\\
&\leq |p^k(\mathbf{x}^{(\ell)}) - q^k(\mathbf{x}^{(\ell)}) - p^{k+1}(\mathbf{x}^{(\ell)}) + q^{k+1}(\mathbf{x}^{(\ell)})|\\
&\leq |p^k(\mathbf{x}^{(\ell)}) - p^{k+1}(\mathbf{x}^{(\ell)})| + |q^k(\mathbf{x}^{(\ell)}) - q^{k+1}(\mathbf{x}^{(\ell)})|\\
&\leq 2\eta^k
\end{aligned}\]
\end{proof}

Empirically, the term $\eta^k$ appears to be nonincreasing (see Section \ref{sec:Expirements}). This motivates the use of a sufficiently low value of $\eta^k$ as a stopping criterion in Algorithm \ref{alg:alt_fit}. 

\subsection{Nondifferentiability of the Loss Function}

In this section, we investigate the geometry of the problem \eqref{eq:ProblemDef} by viewing the loss function as a tropical rational function. In particular, we show that \eqref{eq:ProblemDef} always has a minimizer for which the loss function is nondifferentiable. Moreover, the iterates produced by Algorithm \ref{alg:alt_fit} are always elements of the nondifferentiability locus of the loss function. Finally, we discuss preliminary consequences of nondifferentiability at a minimizer.

\begin{proposition}\label{prop:loss_rat}
The loss function 

\[\mathcal{L}(\mathbf{p},\mathbf{q}) = \left\|\mathbf{X} \boxplus \mathbf{p} - \mathbf{X}\boxplus \mathbf{q} - \mathbf{y}\right\|_{\infty}\]
is a tropical rational function of the coefficients $p_\mathbf{w},q_{\mathbf{w}}$.
\end{proposition}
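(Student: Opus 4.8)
The plan is to show that $\mathcal L$ is built from the coordinate functions $p_{\mathbf w},q_{\mathbf w}$ and real constants by finitely many tropical additions, tropical multiplications, and ordinary subtractions. Each such operation carries tropical rational functions to tropical rational functions: the tropical polynomials are closed under $\oplus$ and $\odot$, and a difference of two tropical rational functions is again one, since $(p_1-q_1)-(p_2-q_2)=(p_1\odot q_2)-(q_1\odot p_2)$. Hence the conclusion will follow once the construction is made explicit, and in fact I would exhibit $\mathcal L$ with a common denominator.

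First I would record that for each data index $i$ the map $\mathbf p\mapsto(\mathbf X\boxplus\mathbf p)_i=\max_{\mathbf w\in W}(p_{\mathbf w}+X_{i,\mathbf w})$ is a tropical polynomial of degree $1$ in the variables $(p_{\mathbf w})_{\mathbf w\in W}$: it is a pointwise maximum of affine functions whose linear parts are standard basis covectors and whose constant terms are $X_{i,\mathbf w}=\mathbf w^\top\mathbf x^{(i)}$; likewise for $\mathbf q\mapsto(\mathbf X\boxplus\mathbf q)_i$. Setting $A_i(\mathbf p):=(\mathbf X\boxplus\mathbf p)_i$ and $B_i(\mathbf q):=(\mathbf X\boxplus\mathbf q)_i+y^{(i)}$, both $A_i$ and $B_i$ are then tropical polynomials in the full coefficient vector $(\mathbf p,\mathbf q)\in\bbR^{2|W|}$, the scalar $y^{(i)}$ merely shifting the coefficients of $B_i$.

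Next I would rewrite the loss. Using $\|\mathbf z\|_\infty=\max_i\max(z_i,-z_i)$ together with the identity $\max(a-b,\,b-a)=\max(2a,2b)-(a+b)$, and noting that the $i$-th residual is $A_i-B_i$,
\[
\mathcal L(\mathbf p,\mathbf q)=\max_{1\le i\le N}\max\!\big(A_i-B_i,\ B_i-A_i\big)=\bigoplus_{i=1}^N\Big(\big(A_i^{\odot 2}\oplus B_i^{\odot 2}\big)\oslash\big(A_i\odot B_i\big)\Big),
\]
so each summand is a tropical rational function $N_i\oslash D_i$ with numerator $N_i=A_i^{\odot 2}\oplus B_i^{\odot 2}$ and denominator $D_i=A_i\odot B_i$ both tropical polynomials. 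Clearing denominators across the outer tropical sum by distributivity of $\odot$ over $\oplus$ gives
\[
\mathcal L=\Big(\bigoplus_{i=1}^N N_i\odot\!\!\bigodot_{j\ne i}D_j\Big)\oslash\bigodot_{j=1}^N D_j,
\]
whose numerator and denominator are tropical polynomials in $(\mathbf p,\mathbf q)$; hence $\mathcal L$ is a tropical rational function of the coefficients.

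I do not expect a genuine obstacle: the argument is bookkeeping, and the only points needing a moment's care are that adjoining the scalar $y^{(i)}$ and passing through $|{\cdot}|=\max({\cdot},-{\cdot})$ keep us inside the tropical-polynomial/tropical-rational class, and that the common-denominator manipulation is a valid identity of functions. One could instead simply invoke closure of tropical rational functions under $\oplus$, $\odot$, and subtraction, but the explicit formula has the advantage of also exhibiting an exponent set $W'$ with $\mathcal L\in\mathbb T(\mathbf p,\mathbf q)_{W'}$, which may be useful for the subsequent discussion of the nondifferentiability locus of the loss.
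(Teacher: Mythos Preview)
Your proof is correct and follows essentially the same approach as the paper: both arguments observe that the evaluation maps $(\mathbf p,\mathbf q)\mapsto p(\mathbf x^{(i)})$ and $(\mathbf p,\mathbf q)\mapsto q(\mathbf x^{(i)})$ are tropical polynomials in the coefficients, then invoke closure of tropical rational functions under $\oplus$ and subtraction. Your version is more explicit---using the identity $|a-b|=\max(2a,2b)-(a+b)$ and clearing denominators to exhibit a single numerator and denominator---whereas the paper stops at the closure statement you mention in your final paragraph.
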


\begin{proof}
Note that 

\[\begin{aligned}
\mathcal{L}(\mathbf{p},\mathbf{q}) &= \left\|\mathbf{X} \boxplus \mathbf{p} - \mathbf{X}\boxplus \mathbf{q} - \mathbf{y}\right\|_{\infty}\\
&= \max_{i = 1,2,\ldots, N} \left[\max(p(\mathbf{x}^{(i)}) - q(\mathbf{x}^{(i)})- y^{(i)}, y^{(i)} + q(\mathbf{x}^{(i)}) - p(\mathbf{x}^{(i)}))\right].
\end{aligned}\]

Now, for each $i = 1,2,\ldots, N$, the evaluation map on $\mathbb{T}[\mathbf{x}]_W$ which sends $g \mapsto g(\mathbf{x}^{(i)})$ is tropically linear in the coefficients $g_\mathbf{w}$. In particular, for each $i = 1,2,\ldots, N$, both $p(\mathbf{x}^{(i)}) - q(\mathbf{x}^{(i)})- y^{(i)}$ and $y^{(i)} + q(\mathbf{x}^{(i)}) - p(\mathbf{x}^{(i)})$ are tropical rational functions of the parameters $p_{\mathbf{w}},q_{\mathbf{w}}$. Because the set of tropical rational functions is closed under tropical addition, this implies that $\mathcal{L}(\mathbf{p},\mathbf{q})$ is a tropical rational function.
\end{proof}

Proposition \ref{prop:loss_rat} allows us to use the polyhedral geometry of tropical hypersurfaces to study the geometry of the optimization problem \eqref{eq:ProblemDef}. 

\begin{proposition}\label{prop:nondiff_exist}
There is an optimal solution to \eqref{eq:ProblemDef}. Moreover, there is an optimal solution $(\mathbf{p}^*,\mathbf{q}^*)$ such that $\nabla \mathcal{L}(\mathbf{p}^*,\mathbf{q}^*)$ does not exist. 
\end{proposition}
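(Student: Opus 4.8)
The plan is to split the statement into two parts: existence of a minimizer, and existence of a minimizer at which $\mathcal{L}$ is nondifferentiable. For existence, I would first argue that $\mathcal{L}(\mathbf{p},\mathbf{q})$ is invariant under the simultaneous shift $(\mathbf{p},\mathbf{q}) \mapsto (\mathbf{p} + c\mathbf{1}, \mathbf{q} + c\mathbf{1})$ for any $c \in \bbR$, since both $p(\mathbf{x}^{(i)})$ and $q(\mathbf{x}^{(i)})$ increase by $c$ and the difference is unchanged. This lets me normalize, say, $\min_{\mathbf{w}} q_{\mathbf{w}} = 0$. Next, I would observe that if any coefficient (say some $p_{\mathbf{w}}$) is sent to $-\infty$ it can simply be deleted from $W$ without increasing the loss, and if some coefficient is sent to $+\infty$ while the others stay bounded the loss blows up; combining this with the normalization, one can restrict the search to a compact set in $(\mathbf{p},\mathbf{q})$-space (after discarding $-\infty$ coordinates), on which the continuous function $\mathcal{L}$ attains its minimum. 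Actually, the cleanest route is to invoke Algorithm \ref{alg:alt_fit} together with Theorem \ref{thm:opt_sol}: the $\mathbf{p}$- and $\mathbf{q}$-subproblems always have finite optimal solutions, so the infimum over all of $(\mathbf{p},\mathbf{q})$ equals the infimum over a bounded region, and lower semicontinuity plus coercivity modulo the shift symmetry gives a minimizer.

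For the nondifferentiability claim, I would use Proposition \ref{prop:loss_rat}: $\mathcal{L}$ is a tropical rational function of the coefficients, hence a continuous piecewise linear function on $\bbR^{2|W|}$ (restricting to finite coordinates). Its nondifferentiability locus $X$ is a finite union of polyhedra of codimension $\geq 1$, and on each connected component $U$ of the complement $\bbR^{2|W|}\setminus X$ the function $\mathcal{L}$ is affine, $\mathcal{L}(\mathbf{z}) = \mathbf{a}_U^\top \mathbf{z} + b_U$. The key point is that $\mathcal{L} \geq 0$ everywhere and $\mathcal{L}$ is bounded below, so on no component can $\mathcal{L}$ be a nonconstant affine function whose minimum is attained — if $\mathbf{a}_U \neq 0$, moving in the direction $-\mathbf{a}_U$ decreases $\mathcal{L}$ until one exits $U$ through a point of $X$, so the infimum of $\mathcal{L}$ over $\overline{U}$ is attained only on $\partial U \subseteq X$. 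Hence any minimizer lying in the interior of a region would force $\mathbf{a}_U = 0$, i.e. $\mathcal{L} \equiv b_U$ on $U$; but then, because $\mathcal{L}$ is continuous and globally bounded below by $0$ with this being the global minimum value, I need to rule out the degenerate possibility that the minimum region is full-dimensional. Here I would use the shift symmetry again: the line $\{(\mathbf{p}+c\mathbf{1},\mathbf{q}+c\mathbf{1})\}$ through any minimizer consists entirely of minimizers, and this line must cross $X$ (it cannot lie in a single open region $U$ forever since along it the coefficients eventually leave any bounded chamber while $\mathcal{L}$ stays constant — the chamber decomposition in the direction $\mathbf{1}\oplus\mathbf{1}$ is nontrivial because the pieces of $p(\mathbf{x}^{(i)})$ as a max of affine functions of $\mathbf{p}$ change as $c$ varies only through ties, but $\mathbf{1}$-shifts preserve ties; so instead I argue the crossing must happen for a generic perturbation).

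Let me restate that last step more carefully, since it is the delicate one. Take any minimizer $(\mathbf{p}^*,\mathbf{q}^*)$ and suppose for contradiction it lies in the interior of a linearity region $U$ of $\mathcal{L}$; then $\mathcal{L}$ is constant on $U$ equal to the global minimum $m$. The region $U$ is an open polyhedron, and I claim it is unbounded in a direction along which the description of $\mathcal{L}$ must eventually break — concretely, send $p_{\mathbf{w}_0} \to +\infty$ for a fixed $\mathbf{w}_0$: eventually $p(\mathbf{x}^{(i)}) = p_{\mathbf{w}_0} + \mathbf{w}_0^\top\mathbf{x}^{(i)}$ for every $i$, so $\mathcal{L} \to \infty$, contradicting that $\mathcal{L}$ is constant $= m$ on all of $U$ unless $U$ fails to extend in that direction — but a nonempty open polyhedron on which an affine function is constant and which is cut out inside the chamber complex of $\mathcal{L}$ cannot be blocked from that direction without its closure meeting $X$. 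The honest fix: pick the specific minimizer produced in the existence argument to be a vertex of the polyhedron $\{\mathcal{L} \leq m\} = \{\mathcal{L} = m\}$ (which is a nonempty polyhedron, being a sublevel set of a piecewise-linear convex-on-each-piece function — or more simply, intersect with the compact normalized region and take an extreme point). A vertex of this polyhedron is a $0$-dimensional face, hence lies on the boundary of some full-dimensional linearity region, hence in $X$, so $\mathcal{L}$ is nondifferentiable there. The main obstacle is exactly this: making rigorous that a suitably chosen minimizer (an extreme point of the minimizer set after normalization) must sit on the nondifferentiability locus rather than in the interior of a linear piece — everything else is routine polyhedral geometry combined with the boundedness-below of $\mathcal{L}$.
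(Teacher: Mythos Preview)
Your core approach matches the paper's: use Proposition~\ref{prop:loss_rat} to view $\mathcal{L}$ as piecewise linear with a finite polyhedral decomposition, get existence from the fact that an affine function bounded below by $0$ on each closed piece attains its infimum there (a linear-programming fact) and there are only finitely many pieces, and for nondifferentiability argue that a minimizer in the interior of a linearity region forces $\mathcal{L}$ to be constant there, then rule out $\mathcal{L}$ being globally constant by sending a single coefficient $p_{\mathbf{w}_0} \to +\infty$. The paper uses exactly this last coercivity step.

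Two comments. First, your existence argument is overcomplicated: the shift invariance, the normalization $\min_{\mathbf{w}} q_{\mathbf{w}} = 0$, the handling of $-\infty$ coordinates, and the appeal to Algorithm~\ref{alg:alt_fit} are all unnecessary once you have the finite decomposition into closed polyhedra on each of which $\mathcal{L}$ is affine and nonnegative. Second, your self-doubt before the ``honest fix'' is misplaced --- that argument is already complete. If $U$ is the open linearity region containing a differentiable minimizer, then $\mathcal{L} \equiv m$ on $U$; since $\mathcal{L}$ is not globally constant (your $p_{\mathbf{w}_0} \to +\infty$ observation), $U \ne \bbR^{2|W|}$, so $\partial U \ne \emptyset$; but $\partial U$ lies in the nondifferentiability locus by construction, and continuity gives $\mathcal{L} = m$ on $\partial U$. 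The paper runs essentially the same idea, working instead with the finer decomposition by $\Sigma = \cV(g) \cup \cV(h)$ and a slightly more elaborate propagation through a maximal connected constant set $B$. Your extreme-point alternative, by contrast, has a real gap: $\{\mathcal{L} = m\}$ is in general a finite \emph{union} of polyhedra rather than a single polyhedron, so ``extreme point'' is not immediately meaningful, and after intersecting with a compact normalizing box an extreme point of the intersection may lie on the artificial boundary of that box rather than on the nondifferentiability locus of $\mathcal{L}$. Drop that detour and the proof is essentially the paper's.
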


\begin{proof}
By Proposition \ref{prop:loss_rat}, there are tropical polynomials $g,h$ in $2|W|$ indeterminates such that

\[\mathcal{L}(\mathbf{p},\mathbf{q}) = g(\mathbf{p},\mathbf{q}) - h(\mathbf{p},\mathbf{q}).\]
The nondifferentiability locus of $\mathcal{L}$ is a subset of $\Sigma = \cV(g)\cup \cV(h) \subseteq \bbR^{2|W|}$. There are finitely many connected components of $\bbR^{2|W|}\setminus \Sigma$, each of which are open polyhedra. Label these polyhedra $A_1,A_2,\ldots, A_s$. 

For the first claim, note that $\cL$ is linear on $\mathrm{cl}(A_i)$ for each $i = 1,\ldots, s$. Because $\cL(\mathbf{p},\mathbf{q})\geq 0$ for all $(\mathbf{p},\mathbf{q}) \in \bbR^{2|W|}$, the restriction of $\cL$ to $\mathrm{cl}(A_i)$ achieves a minimum value $z_i$ on $\mathrm{cl}(A_i)$. Then $\cL$ achieves the minimum value $z = \min_{i = 1,2,\ldots, s} z_i$. 

For the second claim, note that the restriction of $\cL$ to $\mathrm{cl}(A_i)$ achieves its minimum on the boundary $\partial A_i$ for each $i = 1,2\ldots, s$. So, there must be an optimal solution in $\Sigma = \cup_{i = 1}^s \partial A_i$. Let $(\hat{\mathbf{p}},\hat{\mathbf{q}})$ be an optimal solution in $\Sigma$ such that $\nabla \mathcal{L}(\hat{\mathbf{p}},\hat{\mathbf{q}})$ exists. By the hypothesis that $(\hat{\mathbf{p}},\hat{\mathbf{q}})$ is an optimal solution, it is necessary that $\nabla \mathcal{L}(\hat{\mathbf{p}},\hat{\mathbf{q}}) = 0$. 
Relabeling the $A_i$ if necessary, let $A_1,\ldots, A_k$ be such that $(\hat{\mathbf{p}},\hat{\mathbf{q}}) \in \cap_{i = 1}^{k}\mathrm{cl}(A_i)$.  Because $\mathcal{L}$ is linear on each $A_i$, it must be the case that $\nabla \mathcal{L}\vert_{A_i} = 0$ for each $i$. This implies that every point in $A = \cup_{i = 1}^k \mathrm{cl}(A_i)$ is a minimizer of $\mathcal{L}$. Set $B$ to be the smallest connected subset containing $A$ on which $\mathcal{L}$ is minimized. Note that $B = \cup_{i = 1}^r \mathrm{cl}(A_i)$ where $r \geq k$. If $B \not = \bbR^{2|W|}$, then there is a point $(\mathbf{p}^*,\mathbf{q}^*)$ on the boundary of $B$ where $\nabla \mathcal{L}$ does not exist. Otherwise, $B = \bbR^{2|W|}$ and therefore $\mathcal{L}$ is constant. However, $\cL$ cannot be constant because for fixed $\mathbf{w}\in W$, fixed $\mathbf{q}$, and fixed $p_{\mathbf{v}}$ for $\mathbf{v} \not = \mathbf{w}$, 

\[\cL(\mathbf{p},\mathbf{q}) = \max_{i = 1,2,\ldots, N}\left|\mathbf{w}^\top\mathbf{x}^{(i)} + p_{\mathbf{w}} - \max_{\mathbf{w}}(\mathbf{w}^\top \mathbf{x}^{(i)} + q_{\mathbf{w}}) - y^{(i)}\right| = p_{\mathbf{w}} - C\] 
for some constant $C$ for sufficiently large values of $p_{\mathbf{w}}$. \end{proof}

There are problems for which every optimal solution is in the nondifferentiability locus of $\mathcal{L}$.

\begin{example}\label{ex:all_nondiff}
Consider the problem with $\mathcal{D} = \{(0,0)\}$, $W = \{0\}\subseteq \mathbb{Z}$. A tropical rational function with $W$ as its support is 

\[f(x) = p_0 - q_0.\]
 The minimum of $\mathcal{L}$ is $0$, achieved on the line $\spann\{(1,1)\}$. However, along this line, the loss is 

\[\mathcal{L}(p_0,q_0) = \|p_0 - q_0\|_\infty = |p_0-q_0|\]
so that $\nabla \mathcal{L}(p_0,q_0)$ does not exist when $p_0 = q_0$. 
\end{example}

Algorithm \ref{alg:alt_fit} produces iterates in the nondifferentiablity locus of $\mathcal{L}$.

\begin{theorem}
The gradient $\nabla \mathcal{L}(\mathbf{p}^k,\mathbf{q}^k)$ does not exist, where $\mathbf{p}^k$ and $\mathbf{q}^k$ are defined as in Algorithm \ref{alg:alt_fit} and $k\geq 1$. 
\end{theorem}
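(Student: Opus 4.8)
The plan is to combine two observations: first, that the closed-form update makes $(\mathbf{p}^k,\mathbf{q}^k)$ a point where the residual vector $\mathbf{X}\boxplus\mathbf{p}^k - \mathbf{X}\boxplus\mathbf{q}^k - \mathbf{y}$ attains \emph{both} of its extreme values $+e^k$ and $-e^k$; and second, that this sign pattern forces $\mathcal{L}$ to behave like $e^k + |t|$ along a suitable line through $(\mathbf{p}^k,\mathbf{q}^k)$, hence to be nondifferentiable there.

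\textbf{Step 1.} I would first record that at any $\infty$-norm minimizer the residual touches the norm with both signs. Since $\mathbf{q}^k$ minimizes $\|\mathbf{X}\boxplus\mathbf{q} - (\mathbf{X}\boxplus\mathbf{p}^k - \mathbf{y})\|_\infty$ (by Theorem \ref{thm:opt_sol} it is given in closed form, and in particular $\mathbf{p}^k$ and $\mathbf{q}^k$ are finite vectors for $k\geq 1$), set $\mathbf{r} = \mathbf{X}\boxplus\mathbf{q}^k - \mathbf{X}\boxplus\mathbf{p}^k + \mathbf{y}$ and $e^k = \|\mathbf{r}\|_\infty$. If the largest entry of $\mathbf{r}$ were strictly below $e^k$, then replacing $\mathbf{q}^k$ by $\mathbf{q}^k + \delta\mathbf{1}$ for small $\delta>0$ — which shifts $\mathbf{X}\boxplus\mathbf{q}^k$, hence $\mathbf{r}$, uniformly by $\delta$ — would strictly decrease $\|\mathbf{r}\|_\infty$, contradicting optimality; symmetrically for the smallest entry and $\mathbf{q}^k - \delta\mathbf{1}$. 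Hence there are indices $i_+, i_-$ with $\mathbf{r}_{i_-} = e^k$ and $\mathbf{r}_{i_+} = -e^k$, i.e.
\[ p^k(\mathbf{x}^{(i_+)}) - q^k(\mathbf{x}^{(i_+)}) - y^{(i_+)} = e^k, \qquad p^k(\mathbf{x}^{(i_-)}) - q^k(\mathbf{x}^{(i_-)}) - y^{(i_-)} = -e^k, \]
while $|p^k(\mathbf{x}^{(i)}) - q^k(\mathbf{x}^{(i)}) - y^{(i)}| \le e^k$ for every $i$.

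\textbf{Step 2.} Then I would restrict $\mathcal{L}$ to the line $t \mapsto (\mathbf{p}^k + t\mathbf{1}, \mathbf{q}^k)$. Adding $t$ to all coefficients of a tropical polynomial increases its value everywhere by $t$, so with $g_i(\mathbf{p},\mathbf{q}) := p(\mathbf{x}^{(i)}) - q(\mathbf{x}^{(i)}) - y^{(i)}$ we get $g_i(\mathbf{p}^k + t\mathbf{1}, \mathbf{q}^k) = g_i(\mathbf{p}^k,\mathbf{q}^k) + t$, whence $\mathcal{L}(\mathbf{p}^k + t\mathbf{1}, \mathbf{q}^k) = \max_i |g_i(\mathbf{p}^k,\mathbf{q}^k) + t|$. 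By Step 1 the terms $i_+$ and $i_-$ contribute $|e^k + t|$ and $|{-e^k} + t|$, whose maximum is $e^k + |t|$ for $|t|$ small, and no other term exceeds this since $|g_i(\mathbf{p}^k,\mathbf{q}^k) + t| \le e^k + |t|$. Thus $\mathcal{L}$ restricted to this line equals $e^k + |t|$ near $t = 0$; a function differentiable at a point is differentiable along every line through it, so $\nabla\mathcal{L}(\mathbf{p}^k,\mathbf{q}^k)$ cannot exist.

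The main obstacle is Step 1 — the claim that the closed-form minimizer of a tropical $\infty$-norm regression problem produces a residual that attains both $+e^k$ and $-e^k$. The ``perturb by $\pm\delta\mathbf{1}$'' argument is the crux; alternatively one can read this off directly from the formula in Theorem \ref{thm:opt_sol}, using that $(-\mathbf{X}^\top)\boxplus'\mathbf{b}$ is the largest subsolution (so the pre-shift residual is $\le 0$ and, because $\mathbf{X}$ has finite entries, touches $0$ in some coordinate) and that the $+\tfrac12\|\cdot\|_\infty$ shift recenters it. A minor bookkeeping point, needed so the statement even makes sense, is that $\mathbf{p}^k$ and $\mathbf{q}^k$ are genuine vectors in $\bbR^{|W|}$ for $k\ge 1$; this follows by induction from the initialization in Algorithm \ref{alg:alt_fit}.
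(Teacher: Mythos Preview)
Your proof is correct and takes a somewhat different route from the paper's. The paper reduces to the one-block subproblem $\mathcal{R}(\mathbf{u}) = \|\mathbf{A}\boxplus\mathbf{u} - \mathbf{b}\|_\infty$, assumes for contradiction that $\nabla\mathcal{R}(\mathbf{u}^*)$ exists (hence vanishes), picks a row $i$ achieving the norm, identifies the active column set $J$, and perturbs along $\pm\epsilon\mathbf{e}_J$ to produce a difference quotient bounded away from zero. Your argument instead isolates the key structural fact up front --- that an $\ell^\infty$ minimizer must have the residual touch both $+e^k$ and $-e^k$ --- and then perturbs along the canonical direction $\mathbf{1}$ in the $\mathbf{p}$ coordinates, obtaining the exact local formula $\mathcal{L}(\mathbf{p}^k + t\mathbf{1}, \mathbf{q}^k) = e^k + |t|$. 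Your approach buys a more transparent picture of \emph{why} the kink occurs (two residuals of opposite sign, each of modulus $e^k$) and avoids the need to track the active set $J$; the paper's approach has the virtue of cleanly abstracting the argument to the tropical polynomial regression subroutine, making explicit that nondifferentiability is a feature of each half-step rather than of the pair $(\mathbf{p}^k,\mathbf{q}^k)$ specifically.
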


\begin{proof}
For fixed $k \geq 1$, the functions $\mathcal{L}(\mathbf{p},\mathbf{q}^{k-1})$ and $\mathcal{L}(\mathbf{p}^k,\mathbf{q})$ are the infinity norm of the residual of a tropical polynomial regression problem. Because the updates $\mathbf{p}^k$ and $\mathbf{q}^k$ are minimizers of the infinity norm of such residuals, it suffices to show that in the setup of Theorem \ref{thm:opt_sol}, 

\[\mathbf{u}^* = \arg \min_{\mathbf{u}} \|\mathbf{A}\boxplus \mathbf{u} - \mathbf{b}\|_{\infty}\]
 is a nondifferentiable point of the function $\mathcal{R}(\mathbf{u}) = \|\mathbf{A}\boxplus \mathbf{u} - \mathbf{b}\|_{\infty}$.

Suppose for the sake of a contradiction that $\nabla \mathcal{R}(\mathbf{u}^*)$ exists. Because $\mathbf{u}^*$ minimizes $\cR$ by hypothesis, it follows that $\nabla \cR (\mathbf{u}^*) = 0$. Fix indices $i$ and $j$ such that 

\[\cR(\mathbf{u}^*) = |\max_{\ell}(a_{i,\ell} + u_\ell^*) - b_i| = |a_{i,j} + u_j^* - b_i|.\]
 Set $J = \{k\;\vert\; a_{i,k} + u_k^* = \max_{\ell}(a_{i,\ell} + u_\ell^*)\}$ and $\mathbf{e}_J$ to be the vector with 1 in component $k$ if $k \in J$ and $0$ otherwise. Note that the fixed index $j\in J$. Then, if $\epsilon > 0$ is small enough that $a_{i,k} + u_k^* - \epsilon > a_{i,\ell} + u_\ell^*$ when $k \in J$ and $\ell \not \in J$, then there exists $c \in \{-1,1\}$ such that 

\[\cR(\mathbf{u}^* + c\epsilon \mathbf{e}_J) \geq |a_{i,j} +u_j^* + c\epsilon - b_i| = |a_{i,j} + u_j^* - b_i| + \epsilon.\]
 But then, the difference quotient

\[\left|\frac{\cR(\mathbf{u}^* + c\epsilon \mathbf{e}_J) - \cR(\mathbf{u}^*)}{\epsilon}\right| \geq 1\]
 is bounded away from 0 for $\epsilon > 0$ sufficiently small, a contradiction with the hypothesis that $\nabla \cR(\mathbf{u}^*) = 0$.

\end{proof}

Proposition \ref{prop:nondiff_exist} and Example \ref{ex:all_nondiff} demonstrate the importance of understanding the nondifferentiability locus of $\mathcal{L}$. The two sources of nondifferentiability in $\mathcal{L}$ are the nondifferentiability of $\|\mathbf{u} - \mathbf{v}\|_{\infty}$ as a function of $\mathbf{u}$ and the nondifferentiability of the tropical rational functions $f(\mathbf{x}^{(i)}) = p(\mathbf{x}^{(i)})-q(\mathbf{x}^{(i)})$ as a function of the $p_\mathbf{w}$ and $q_\mathbf{w}$. This connects the geometry of the dataset to that of a tropical rational function produced as an iterate of Algorithm \ref{alg:alt_fit} by providing a certificate that $(\mathbf{p},\mathbf{q})$ is in the nondifferentiability locus of $\cL$ in terms of the input data.

\begin{proposition}\label{prop:Geometry_opt}
There exists a minimizer $(\mathbf{p}^*,\mathbf{q}^*)$ of $\mathcal{L}$ such that at least one of the following holds:

\begin{enumerate}
\item There is an $i$ such that $\mathbf{x}^{(i)} \in \cV(p^*)\cup \cV(q^*)$  
\item The infinity norm in $\mathcal{L}(\mathbf{p}^*,\mathbf{q}^*)$ is achieved by at least two data points $(\mathbf{x}^{(i)},y^{(i)}),(\mathbf{x}^{(j)},y^{(j)})$.
\end{enumerate}
\end{proposition}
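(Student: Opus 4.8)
The plan is to start from the minimizer $(\mathbf{p}^*,\mathbf{q}^*)$ produced by Proposition \ref{prop:nondiff_exist}, which lies in the nondifferentiability locus $\Sigma = \cV(g)\cup\cV(h)$ of $\mathcal{L}$, and argue that \emph{any} point of $\Sigma$ forces one of the two alternatives. Recall from Proposition \ref{prop:loss_rat} that $\mathcal{L}(\mathbf{p},\mathbf{q}) = \max_{i}\max\big(f(\mathbf{x}^{(i)})-y^{(i)},\, y^{(i)}-f(\mathbf{x}^{(i)})\big)$ where $f(\mathbf{x}^{(i)}) = p(\mathbf{x}^{(i)})-q(\mathbf{x}^{(i)})$. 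I would decompose the ways $\mathcal{L}$ can fail to be differentiable at $(\mathbf{p}^*,\mathbf{q}^*)$ into two mutually exhaustive cases, matching the two sources of nondifferentiability identified in the paragraph preceding the statement: either the outer maximum (over $i$ and over the two signs) is attained non-uniquely, or the outer maximum is attained uniquely but the inner function $f(\mathbf{x}^{(i)})$ is itself nondifferentiable in $(\mathbf{p},\mathbf{q})$ at that point for the active index $i$.

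First I would handle the case where, at $(\mathbf{p}^*,\mathbf{q}^*)$, every $f(\mathbf{x}^{(i)})$ is differentiable as a function of the coefficients near $(\mathbf{p}^*,\mathbf{q}^*)$ — that is, for each $i$ the maxima $\max_{\mathbf{w}}(\mathbf{w}^\top\mathbf{x}^{(i)}+p^*_{\mathbf{w}})$ and $\max_{\mathbf{w}}(\mathbf{w}^\top\mathbf{x}^{(i)}+q^*_{\mathbf{w}})$ are each attained at a unique $\mathbf{w}$, equivalently $\mathbf{x}^{(i)}\notin\cV(p^*)\cup\cV(q^*)$. Then $\mathcal{L}$ is locally a max of finitely many affine functions (indexed by $i$ and a sign), and for it to be nondifferentiable at $(\mathbf{p}^*,\mathbf{q}^*)$ at least two of these affine pieces must be simultaneously active and disagree in gradient; two distinct active pieces with the same gradient would make $\mathcal{L}$ locally affine. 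Two active pieces coming from the \emph{same} index $i$ but opposite signs would force $f(\mathbf{x}^{(i)})-y^{(i)} = y^{(i)}-f(\mathbf{x}^{(i)})$, i.e. the residual is zero, hence $\mathcal{L}(\mathbf{p}^*,\mathbf{q}^*)=0$ and then \emph{every} data point attains the infinity norm — which gives alternative (2) as long as $N\ge 2$ (and if $N=1$ one can perturb to split the single active piece, or simply note the statement is about existence of a minimizer and handle $N=1$ directly via Example \ref{ex:all_nondiff}-type reasoning, where $\mathbf{x}^{(1)}\in\cV$ is forced or the point is degenerate). Otherwise the two active pieces come from two distinct indices $i\ne j$, and then the infinity norm is attained at both $(\mathbf{x}^{(i)},y^{(i)})$ and $(\mathbf{x}^{(j)},y^{(j)})$, which is exactly alternative (2).

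In the complementary case, there is some index $i$ for which $f(\mathbf{x}^{(i)})$ is nondifferentiable in $(\mathbf{p},\mathbf{q})$ at $(\mathbf{p}^*,\mathbf{q}^*)$; unwinding the definition, this means $\max_{\mathbf{w}}(\mathbf{w}^\top\mathbf{x}^{(i)}+p^*_{\mathbf{w}})$ or $\max_{\mathbf{w}}(\mathbf{w}^\top\mathbf{x}^{(i)}+q^*_{\mathbf{w}})$ is attained by two distinct exponents, which by the definition of the tropical hypersurface says precisely $\mathbf{x}^{(i)}\in\cV(p^*)\cup\cV(q^*)$ — alternative (1). The subtle point here, and what I expect to be the main obstacle, is that $\mathcal{L}$ is a \emph{composition}: even if $f(\mathbf{x}^{(i)})$ is nondifferentiable at $(\mathbf{p}^*,\mathbf{q}^*)$, it is a priori possible that $i$ is not an active index for the outer max, so that the nondifferentiability of the inner function is invisible in $\mathcal{L}$; one must use that $(\mathbf{p}^*,\mathbf{q}^*)\in\Sigma$ was \emph{chosen} so that $\mathcal{L}$ genuinely fails to be differentiable there. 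So the real content is a dichotomy argument: if no active index $i$ has $\mathbf{x}^{(i)}\in\cV(p^*)\cup\cV(q^*)$ and no two active affine pieces disagree, then $\mathcal{L}$ would be differentiable at $(\mathbf{p}^*,\mathbf{q}^*)$, contradicting membership in $\Sigma$. I would make this precise by working with the finite collection of affine functions active at the point, checking that differentiability of $\mathcal{L}$ is equivalent to (a) a unique active pair $(i,\text{sign})$ \emph{and} (b) $f(\mathbf{x}^{(i)})$ differentiable there — the contrapositive then yields that at a point of $\Sigma$ either (a) or (b) fails, giving alternative (2) or (1) respectively. A minor bookkeeping step is ruling out or absorbing the zero-residual and $N=1$ edge cases into alternative (2) or (1), which I would dispatch at the end.
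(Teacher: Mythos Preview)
Your approach matches the paper's: both take the nondifferentiable minimizer supplied by Proposition \ref{prop:nondiff_exist} and argue by contrapositive that if neither (1) nor (2) holds then $\mathcal{L}$ is locally an affine function of $(\mathbf{p},\mathbf{q})$ near $(\mathbf{p}^*,\mathbf{q}^*)$, contradicting nondifferentiability. Your ``subtle point'' worry is unnecessary since condition (1) does not require the index $i$ to be active for the outer max, so your Case B gives (1) immediately; otherwise your argument is sound and in fact more careful about the zero-residual edge case than the paper, which simply asserts $\mathcal{L}(\mathbf{p}^*,\mathbf{q}^*)>0$ without justification.
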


\begin{proof}
We show the contrapositive. Let $(\mathbf{p}^*,\mathbf{q}^*)$ be a minimizer of $\mathcal{L}$ such that $\nabla \mathcal{L}(\mathbf{p}^*,\mathbf{q}^*)$ does not exist and suppose that neither condition holds. Let $1\leq i\leq N$ be such that 

\[\cL(\mathbf{p}^*,\mathbf{q}^*)=\left|\max_{\mathbf{w}\in W}(p^*_\mathbf{w} + \mathbf{w}^\top \mathbf{x}^{(i)}) - \max_{\mathbf{w}\in W}(q^*_\mathbf{w} + \mathbf{w}^\top \mathbf{x}^{(i)})-y^{(i)}\right| > \left|\max_{\mathbf{w}\in W}(p^*_\mathbf{w} + \mathbf{w}^\top \mathbf{x}^{(j)}) - \max_{\mathbf{w}\in W}(q^*_\mathbf{w} + \mathbf{w}^\top \mathbf{x}^{(j)}) - y^{(j)}\right|\]
 for all $j \not = i$. Then there is an open neighborhood $U \subseteq \bbR^{2|W|}$ of $(\mathbf{p}^*,\mathbf{q}^*)$ such that

\[\mathcal{L}(\mathbf{p},\mathbf{q}) = \left|\max_{\mathbf{w}\in W}(p_\mathbf{w} + \mathbf{w}^\top \mathbf{x}^{(i)}) - \max_{\mathbf{w}\in W}(q_\mathbf{w} + \mathbf{w}^\top \mathbf{x}^{(i)})-y^{(i)}\right|\]
 for all $(\mathbf{p},\mathbf{q})\in U$. Because $\cL(\mathbf{p}^*,\mathbf{q}^*) > 0$, it suffices to show that if $\mathbf{x}^{(i)} \not \in \cV(p^*) \cup \cV(q^*)$ then the evaluation map $(\mathbf{p},\mathbf{q})\mapsto p(\mathbf{x}^{(i)}) - q(\mathbf{x}^{(i)})$ is differentiable at $(\mathbf{p}^*,\mathbf{q}^*)$. By the hypothesis that $\mathbf{x}^{(i)} \not \in \cV(p^*) \cup \cV(q^*)$, there are $\mathbf{w}_1,\mathbf{w}_2$ such that $p^*_{\mathbf{w}_1} + \mathbf{w}_1^\top\mathbf{x}^{(i)} > p^*_{\mathbf{w}} + \mathbf{w}^\top\mathbf{x}^{(i)}$ for all $\mathbf{w}_1 \not = \mathbf{w} \in W$ and $q^*_{\mathbf{w}_2} + \mathbf{w}_2^\top\mathbf{x}^{(i)} > q^*_{\mathbf{w}} + \mathbf{w}^\top\mathbf{x}^{(i)}$ for all $\mathbf{w}_2 \not = \mathbf{w} \in W$. Restricting $U$ to a smaller open neighborhood of $(\mathbf{p}^*,\mathbf{q}^*)$ if necessary then gives that 

\[\cL(\mathbf{p},\mathbf{q}) = \left|p_{\mathbf{w}_1} + \mathbf{w}_1^\top \mathbf{x}^{(i)} - q_{\mathbf{w}_1} + \mathbf{w}_2^\top \mathbf{x}^{(i)} -y^{(i)} \right|\]
 near $(\mathbf{p}^*,\mathbf{q}^*)$. This is an affine function of $(\mathbf{p},\mathbf{q})$ on $U$ because $\cL(\mathbf{p},\mathbf{q}) > 0$ and therefore $\nabla \cL (\mathbf{p}^*,\mathbf{q}^*)$ exists.

\end{proof}

A further exploration of geometric conditions relating optimal functions to the training data would be interesting, but we do not pursue this line of inquiry further in the current work.  

\subsection{Polynomial Evaluation}\label{sec:Poly_eval}  In order to effectively use Algorithm \ref{alg:alt_fit}, we need to be able to efficiently perform the matrix-vector multiplications involved in solving the minimization problem. This amounts to evaluating a  tropical polynomial and performing a min-plus matrix-vector product using the negative transpose of a ``Vandermonde" type matrix. 

\paragraph{Univariate Polynomial Evaluation}
Let $n=1$ and consider the case of evaluating the degree $d$ univariate tropical polynomial $p(t) = \max_{0\leq j \leq d}(jt + p_j)$ at the points $x^{(1)},x^{(2)},\ldots, x^{(N)} \in \bbR$. In this case the matrix $\mathbf{X}$ is given by $\mathbf{X} = \begin{bmatrix} \mathbf{0} & \mathbf{x} & \cdots &d\mathbf{x}\end{bmatrix}$, where $\mathbf{x}$ is the vector of the $x^{(i)}$. Then, $\mathbf{v} = \mathbf{X}\boxplus \mathbf{p}$ a vector of evaluations of $p$ at the $x^{(i)}$. This computation does not require the explicit formation of the highly structured matrix $\mathbf{X}$. The solution $\mathbf{v}$ can be computed by setting $\mathbf{v}^{0} = p_0\mathbf{1}$ and computing 

\[\mathbf{v}^k = \max(\mathbf{v}^{k-1},k\mathbf{x} + p_k\mathbf{1}), \quad 1\leq k\leq d,\]
 so that $\mathbf{v}^d = \mathbf{v}$. This approach avoids the construction of the $N \times (d+1)$ matrix $\mathbf{X}$ and instead only uses the length $N$ vector $\mathbf{x}$.

Similarly, an explicit construction of the matrix $\mathbf{X}$ can be avoided when computing $\hat{\mathbf{p}} = (-\mathbf{X}^\top) \boxplus' \mathbf{y}$. This follows because $\hat{p_j} = \min_{1\leq i \leq N}(y_i - jx_i)$, so that there is no need to construct $\mathbf{X}$.

Finally, to compute $\mathbf{v} = \mathbf{X}\boxplus \left((-\mathbf{X})^\top \boxplus' \mathbf{y}\right)$, we set $\mathbf{v}^0 = \min_{1\leq i\leq N}(y_i)\mathbf{1}$ and compute 

\[\mathbf{v}^k = \max\left(\mathbf{v}^{k-1},k\mathbf{x} + \min_{1\leq i \leq N}(y_i - kx_i)\mathbf{1}\right), \quad 1\leq k\leq d.\]
Then, $\mathbf{v}^d = \mathbf{v} = \mathbf{X}\boxplus \left((-\mathbf{X})^\top \boxplus' \mathbf{y}\right)$. 

The methods in the univariate case form the basis for effective computations with multivariate tropical polynomials as the number of columns in the matrix $\mathbf{X}$ grows as $d^n$. 

\paragraph{Multivariate Polynomial Evaluation}

We extend the univariate polynomial evaluation method to the multivariate case by considering a polynomial $p\in \mathbb{T}[x_1,\ldots,x_n]$ as a polynomial in the variable $x_n$ with coefficients in $\mathbb{T}[x_1,\ldots, x_{n-1}]$ and evaluating the coefficients.

For example, in the bivariate case, the polynomial

\[p(x_1,x_2) = \max_{0\leq i\leq d_1, 0\leq j \leq d_2}(ix_1 +jx_2 + p_{i,j})\]
 is to be evaluated at a given set of evaluation points $\left(x_1^{(1)},x_2^{(1)}\right),\left(x_1^{(2)},x_2^{(2)}\right),\ldots, \left(x_1^{(N)},x_2^{(N)}\right) \in \mathbb{R}^2$. Rewrite the polynomial $p$, collecting all terms of the same degree in $x_2$. Using tropical notation, this gives

\[p(x_1,x_2) = \bigoplus_{j = 0}^{d_2} x_2^{\odot j}\odot\left(\bigoplus_{i = 0}^{d_1} x_1^{\odot i} \odot p_{i,j}\right) = \max_{j = 0,\ldots, d_2} \left(jx_2 + \max_{i = 0,\ldots, d_1} (ix_1 + p_{i,j})\right).\]

Now, the term $\max_{i = 0,\ldots, d_1} (ix_1 + p_{i,j})$ is a univariate tropical polynomial for each $j$ and can therefore be evaluated without the construction of the matrix $\mathbf{X}$. Once these terms are each evaluated, $p$ is a univariate polynomial in $x_2$. 
Ultimately, this avoids the construction of the large $N \times (d_1 + 1)(d_2 + 1)$ matrix $\mathbf{X}$ and instead only uses the $N$ pairs $\left(x_1^{(i)},x_2^{(i)}\right)$ and the degree bounds $d_1,d_2$.

We also compute the solution to the polynomial subfit problem without explicitly constructing the matrix $\mathbf{X}$. Similarly to the univariate case, each entry in the output of $\hat{\mathbf{p}} =(-\mathbf{X}^\top) \boxplus' \mathbf{y}$ has the form $$\hat{p}_{\mathbf{w}}=\min_{1\leq i\leq N}\left(y_i - \sum_{j = 1}^{n}w_jx_j^{(i)}\right).$$ So, it is not necessary to store more than the evaluation points $\left(x_1^{(i)},x_2^{(i)}\right)$. 

Finally, to evaluate the product $\mathbf{v} = \mathbf{X}\boxplus\left((-\mathbf{X})^\top \boxplus' \mathbf{y}\right)$, we initialize $\mathbf{v}^{0} = \min_{1\leq i \leq N}(y_i) \mathbf{1}$ and set $D = \prod_{\ell = 1}^{n}(d_\ell+1)-1$. For an enumeration $W\setminus \{0\} = \left\{\mathbf{w}^{(1)},\mathbf{w}^{(2)},\ldots, \mathbf{w}^{(D)}\right\}$ set 

\[\mathbf{u}^k = \begin{bmatrix}\sum_{j = 1}^{n}w_j^{(k)}x_j^{(1)} & \sum_{j = 1}^{n}w_j^{(k)}x_j^{(2)} & \cdots & \sum_{j = 1}^{n}w_j^{(k)}x_j^{(N)} \end{bmatrix}^\top, \quad k = 1,2,\ldots, D\]
and update 
\[\mathbf{v}^k = \max\left(\mathbf{v}^{k-1}, \mathbf{u}^k + \min_{1\leq i \leq N}(y_i - u^k_i)\mathbf{1}\right), \quad k = 1,2,\ldots, D.\]
Then $\mathbf{v}^{D} = \mathbf{v} = \mathbf{X}\boxplus\left((-\mathbf{X})^\top \boxplus' \mathbf{y}\right)$. Note that if $\mathbf{w}^{(k)} - \mathbf{w}^{(k-1)}$ is the standard basis vector $\mathbf{e}_j$, then $\mathbf{u}^k$ can be constructed from $\mathbf{u}^{k-1}$ as $\mathbf{u}^k = \mathbf{u}^{k-1} + \begin{bmatrix}x_j^{(1)} & x_j^{(2)} & \cdots & x_j^{(N)}\end{bmatrix}^\top $ and therefore the updates to $\mathbf{u}^{k}$ and $\mathbf{v}^{k}$ can both be computed efficiently from the input data. 

\section{Computational Experiments}\label{sec:Expirements}

In this section, we use Algorithm \ref{alg:alt_fit} for regression tasks and examine its convergence behavior empirically. We provide univariate, bivariate, and higher dimensional examples. In the univariate case we analyze the relationship between the degree hyperparameter and the error in the computed fit. In the bivariate case, we analyze the effect of precomposition with a scaling parameter $c$ as in Corollary \ref{cor:scal_rat}. For six variable functions, we examine the use of Algorithm \ref{alg:alt_fit} on data generated from tropical rational functions. Finally, we present preliminary experiments using the output of Algorithm \ref{alg:alt_fit} to initialize ReLU neural networks. All Matlab and Python codes to reproduce our experiments can be found at

\begin{center}
\url{https://github.com/Alex-Dunbar/Tropical-Data.git}.
\end{center}

\subsection{Univariate Data} We apply Algorithm \ref{alg:alt_fit} to a dataset consisting of 200 equally spaced points $x^{(i)} \in [-1,12]$ and corresponding $y$ values $y^{(i)} = \sin(x^{(i)}) + \epsilon^{(i)}$, where $\epsilon^{(i)} $ is independent zero-mean noise. Figure \ref{fig:univariate_fit} shows an example, with $d = 15$. We use a stopping criterion of $\eta^k \leq 10^{-12}$. The infinity norm of the error and the infinity norm of the update step at each iteration are plotted in Figure \ref{fig:sin_alternate_convergence}. Both the training loss and the update norm are nonincreasing and have regions on which they are constant.

\begin{figure}
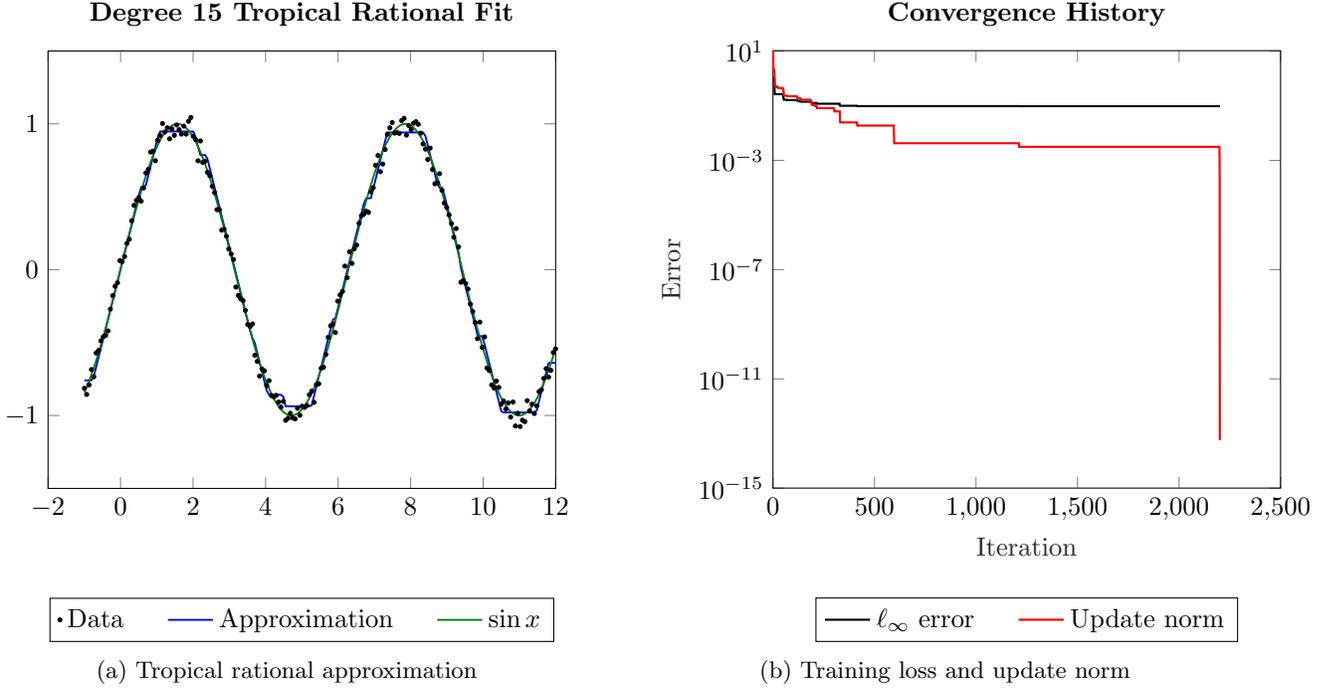

\centering
    \begin{subfigure}[b]{0.47\textwidth}
    \centering
    \input{Figures/Tikz/sin_1515_alternate_fit}
    \caption{Tropical rational approximation}
    \label{fig:sin_alternate_fit}
    \end{subfigure}
    \hfill
    \begin{subfigure}[b]{0.47 \textwidth}
    \centering
    \input{Figures/Tikz/sin_1515_alternate_convergence}
    \caption{Training loss and update norm}
    \label{fig:sin_alternate_convergence}
    \end{subfigure}
    \caption{Results of applying Algorithm \ref{alg:alt_fit} with degree 15 tropical rational functions to noisy data from a sine curve. Figure \ref{fig:sin_alternate_fit} shows the training data, the approximation by a tropical rational function, and the function $\sin x$. The approximating function captures the general behavior of the dataset. Figure \ref{fig:sin_alternate_convergence} shows the $\ell_\infty$ error $e^k = \|\mathbf{X}\boxplus \mathbf{p}^k - \mathbf{X}\boxplus \mathbf{q}^k - \mathbf{y}\|_\infty$ and the update norm $\eta^k = \|\begin{bmatrix}\mathbf{p}^{k+1} & \mathbf{q}^{k+1}\end{bmatrix}^\top - \begin{bmatrix}\mathbf{p}^{k} & \mathbf{q}^{k}\end{bmatrix}^\top\|_\infty$. Both the training loss and the update norm are nonincreasing and contain intervals on which they are nearly constant.} 
    \label{fig:univariate_fit}
\end{figure}

\paragraph{Effect of Degree} Here, we investigate the relationship between the degree of tropical rational function and the error in the fit. Specifically, we generate a dataset as in the above example and use Algorithm \ref{alg:alt_fit} to fit a tropical rational function of degree $d$ to the dataset for $d = 1,2,\ldots, 20$. As a stopping criterion in Algorithm \ref{alg:alt_fit}, we use $\eta^k \leq 10^{-12}$ or a maximum $k_{\max} = 10000$. Figure \ref{fig:degree_effect_sin} shows the relationship between the degree of the rational function and the error in the fit. Note that the error decreases as a function of the degree with a large decrease in error when the degree is 5. The number of iterations needed to achieve the stopping criterion is generally increasing but is not monotonic. 

\begin{figure}
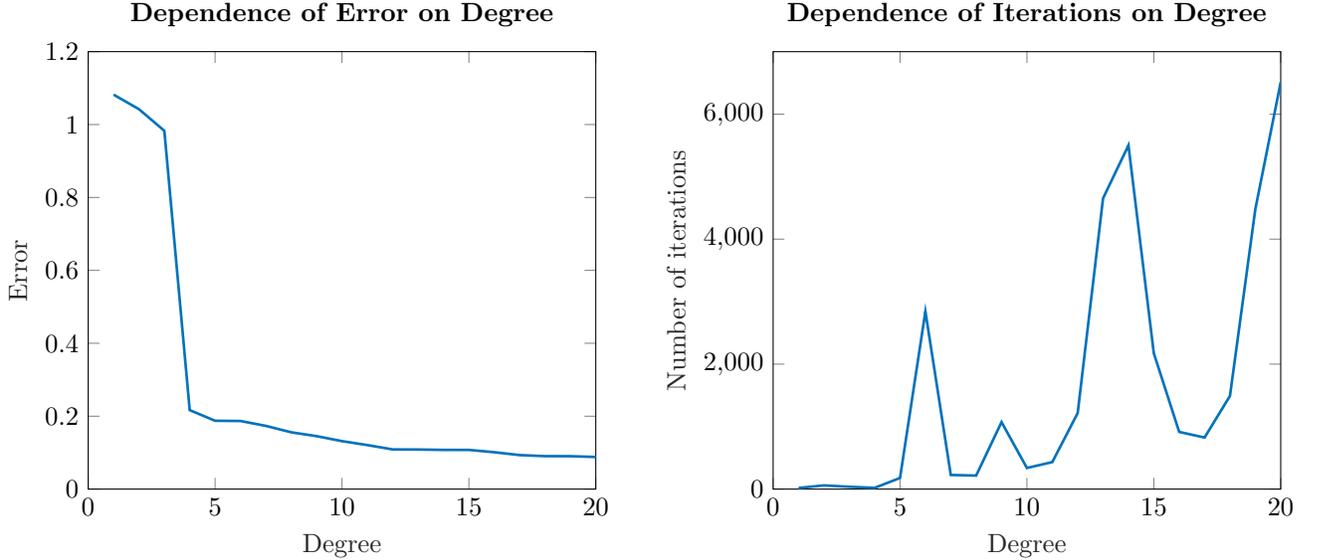

    \centering
    \begin{subfigure}[b]{0.47\textwidth}
        \input{Figures/Tikz/degree_error_sin}
    \end{subfigure}
    \hfill
    \begin{subfigure}[b]{0.47\textwidth}
        \input{Figures/Tikz/degree_iter_sin}
    \end{subfigure}
    \caption{Dependence of error and number of iterations on degree of tropical rational function fit to noisy data from a sine curve. The error decreases monotonoically as a function of degree with a large drop at degree 5. The number of iterations needed to reach the stopping criterion of $\eta^k \leq 10^{-12}$ generally increases with the degree.}
    \label{fig:degree_effect_sin}
\end{figure}

\subsection{Bivariate Data}

We use the method to approximate the Matlab \texttt{peaks} dataset using $\mathbf{d} = (10,10)$ and $\mathbf{d} = (31,31)$ and training until $\eta^k \leq 10^{-12}$. Explicitly, the \texttt{peaks} dataset consists of $2401 = 49^2$ equally spaced $(x_1,x_2)$ pairs in $[-3,3]^2$ and their evaluations 

\[\texttt{peaks}(x_1,x_2) = 3(1-x_1)^2e^{-x_1^2-(x_2+1)^2}-10\left(\frac{x_1}{5}-x_1^3-x_2^5\right)e^{-x_1^2-x_2^2}-\frac{1}{3}e^{-(x_1+1)^2 -x_2^2}.\] 

The fits and the error are shown below in Figure \ref{fig:Peaks_fit}. Note that in both cases there is error in the regions on which the data is nearly constant despite the piecewise linear nature of the tropical rational functions. As in the univariate case, the training error and the update norm are nonincreasing and have regions where they are constant over many iterations. 

\begin{figure}
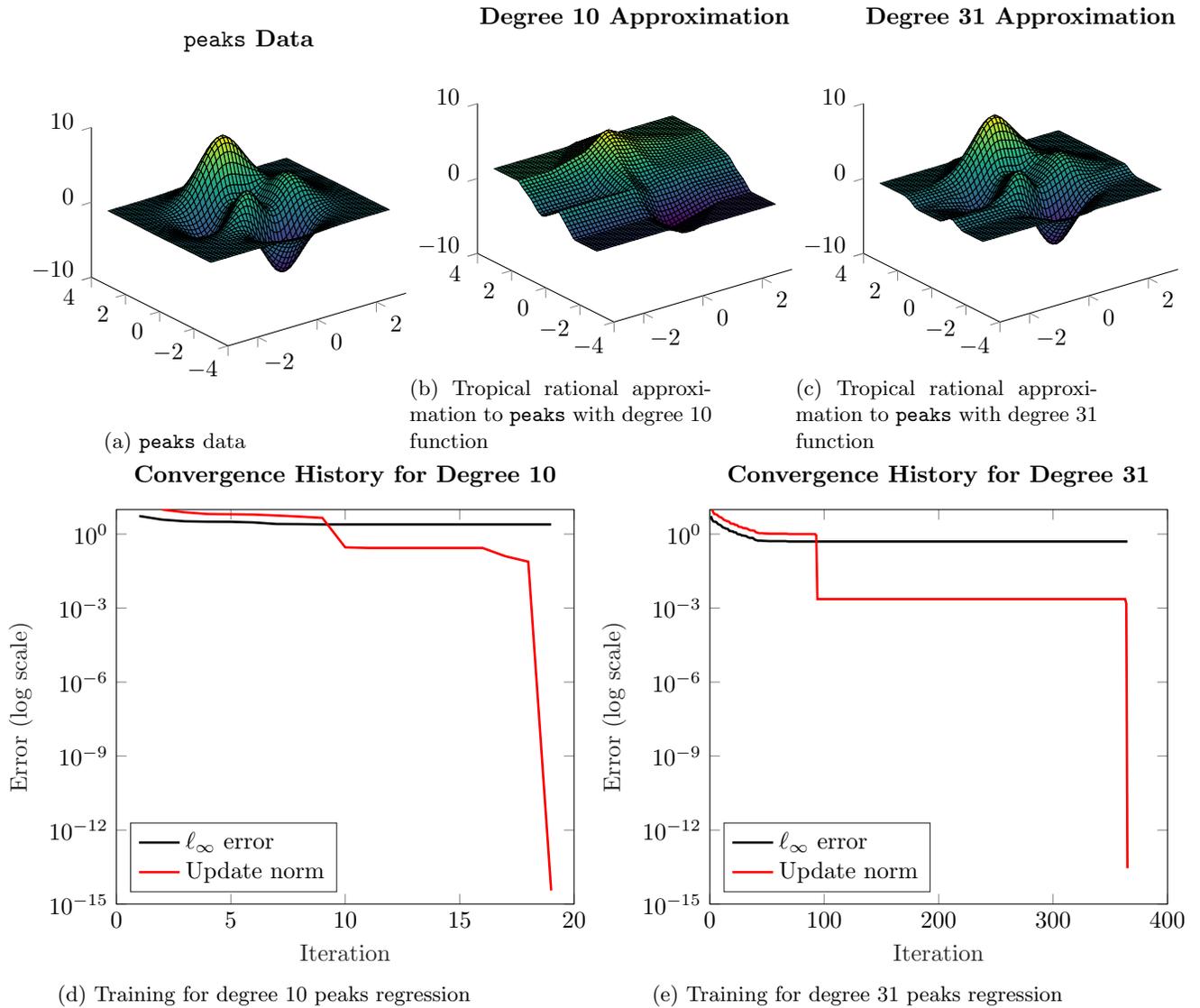

\centering
    \begin{subfigure}[b]{0.27\textwidth}
    \centering
    \input{Figures/Tikz/PeaksTrue}
    \caption{\texttt{peaks} data}
    \end{subfigure}
    \hspace{1cm}
    \begin{subfigure}[b]{0.27\textwidth}
    \input{Figures/Tikz/peaks_1010_alternate_fit}
    \caption{Tropical rational approximation to \texttt{peaks} with degree $10$ function}
    \label{fig:1010peaks}
    \end{subfigure}
    \hspace{1cm}
    \begin{subfigure}[b]{0.27\textwidth}
    \centering
    \input{Figures/Tikz/peaks_3131_alternate_fit}
    \caption{Tropical rational approximation to \texttt{peaks} with degree $31$ function}
    \label{fig:3131peaks}
    \end{subfigure}
    
    \begin{subfigure}[b]{0.47\textwidth}
    \centering
    \input{Figures/Tikz/peaks_1010_alternate_convergence}
    \caption{Training for degree 10 peaks regression}
    \label{fig:1010peakserror}
    \end{subfigure}
    \hfill
    \begin{subfigure}[b]{0.47\textwidth}
    \centering
    \input{Figures/Tikz/peaks_3131_alternate_convergence}
    \caption{Training for degree 31 peaks regression}
    \label{fig:3131peakserror}
    \end{subfigure}
    \caption{Results of applying Algorithm \ref{alg:alt_fit} with degree $10$ and $31$ tropical rational functions to the \texttt{peaks} dataset. The resulting degree $31$ function sketches the general behavior of the dataset (Figure \ref{fig:3131peaks}), while the degree $10$ function fails to approximate the data (Figure \ref{fig:1010peaks}). Figures \ref{fig:1010peakserror} and \ref{fig:3131peakserror} display the the $\ell_\infty$ error $e^k = \|\mathbf{X}\boxplus \mathbf{p}^k - \mathbf{X}\boxplus \mathbf{q}^k - \mathbf{y}\|_\infty$ and the update norm $\eta^k = \|\begin{bmatrix}\mathbf{p}^{k+1} & \mathbf{q}^{k+1}\end{bmatrix}^\top - \begin{bmatrix}\mathbf{p}^{k} & \mathbf{q}^{k}\end{bmatrix}^\top\|_\infty$. For both degrees, the training loss and the update norm are each nonincreasing and contain intervals on which they are nearly constant.}\label{fig:Peaks_fit}
\end{figure}

\paragraph{Effect of Scaling Parameter} In the above experiments, we directly fit a tropical rational function to the data. However, Corollary \ref{cor:scal_rat} suggests that we should fit a function of the form $f(c\mathbf{x})$, where $c \in \bbR$ and $f$ is a tropical rational function. To this end, we fit functions of the form $f(c\mathbf{x})$ for 21 equally spaced values of $c \in [1,3]$  and $f$ a tropical rational function of degree $35$. For each value of $c$, we use a stopping criterion of $\eta^k \leq 10^{-12}$ or a maximum of 500 iterations of the alternating method described in Algorithm \ref{alg:alt_fit} to find a tropical rational function $f$. The dependence of the training error on $c$ is shown in Figure \ref{fig:peaks_scaling_error} below. Note that the optimal value of $c$ is roughly 1.3. More generally, for fixed degree $d$, changing the value of $c$ gives a trade-off between maximum slope and resolution between slopes. 

\begin{figure}
    \centering
    \input{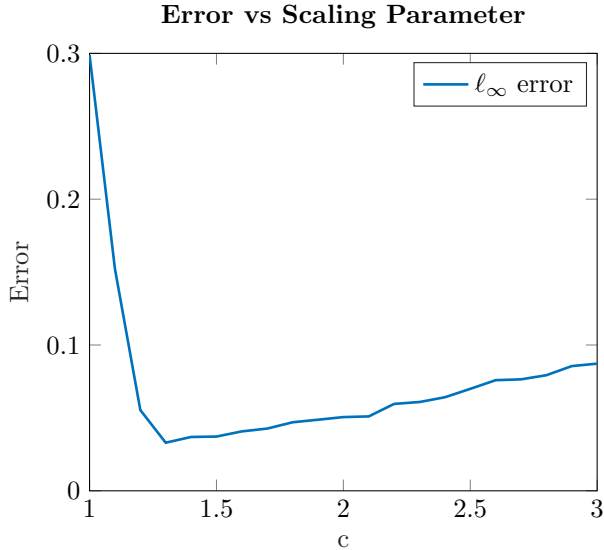}
    \caption{Error in approximation to the \texttt{peaks} dataset when using a degree $(35,35)$ tropical rational function with inputs scaled by $c$. Here, the optimal value of $c$ is roughly 1.3 and gives a much lower training error than the optimal function with unscaled inputs.}
    \label{fig:peaks_scaling_error}
\end{figure}

\subsection{Higher Dimensional Examples}

We test Algorithm \ref{alg:alt_fit} on functions with many variables. These experiments suggest that the alternating minimization method is able to find solutions with low training loss. However, these solutions do not appear to generalize well, even on data generated from tropical rational functions.

\paragraph{Regression on 6 Variable Function}

We fit a tropical rational function to the 6 variable function

\[g(\mathbf{x}) = x_1x_2x_3 + 2x_4x_5^2\sin(x_6^2)\]

\noindent on a training set consisting of $N = 10000$ points drawn uniformly at random from $[0,1]^6$ and then test on a test set generated in the same way. Here, we fix the maximum degree of the numerator and denominator to be 3 for each variable and train until $\eta^k \leq 10^{-12}$ or for a maximum of 500 iterations. There are 8192 trainable parameters. The convergence behavior during training is shown in Figure \ref{fig:6var_conv}. The $\ell^\infty$ error on the test set is $0.2721$, which is roughly $9.75$ times the final training error of $0.0279$.

\paragraph{Regression on 10 Variable Function}

We fit a tropical rational function to the 10 variable function

\[h(\mathbf{x}) = x_1x_2x_3 + 2x_4x_5^2\sin(x_6^2) - e^{x_7x_8x_9x_{10}}\]

\noindent on a training set consisting of $N = 10000$ points drawn uniformly at random from $[0,1]^{10}$ and then test on a test set generated in the same way. Here, we fix the maximum degree of the numerator and denominator to be 1 for each variable and train until $\eta^k \leq 10^{-12}$ or for a maximum of 500 iterations. There are 2048 trainable parameters. The convergence behavior during training is shown in Figure \ref{fig:10var_conv}. The $\ell^\infty$ error on the test set is $0.6828$, which is roughly $2.9$ times the final training error of $0.2342$.

\begin{figure}[t]
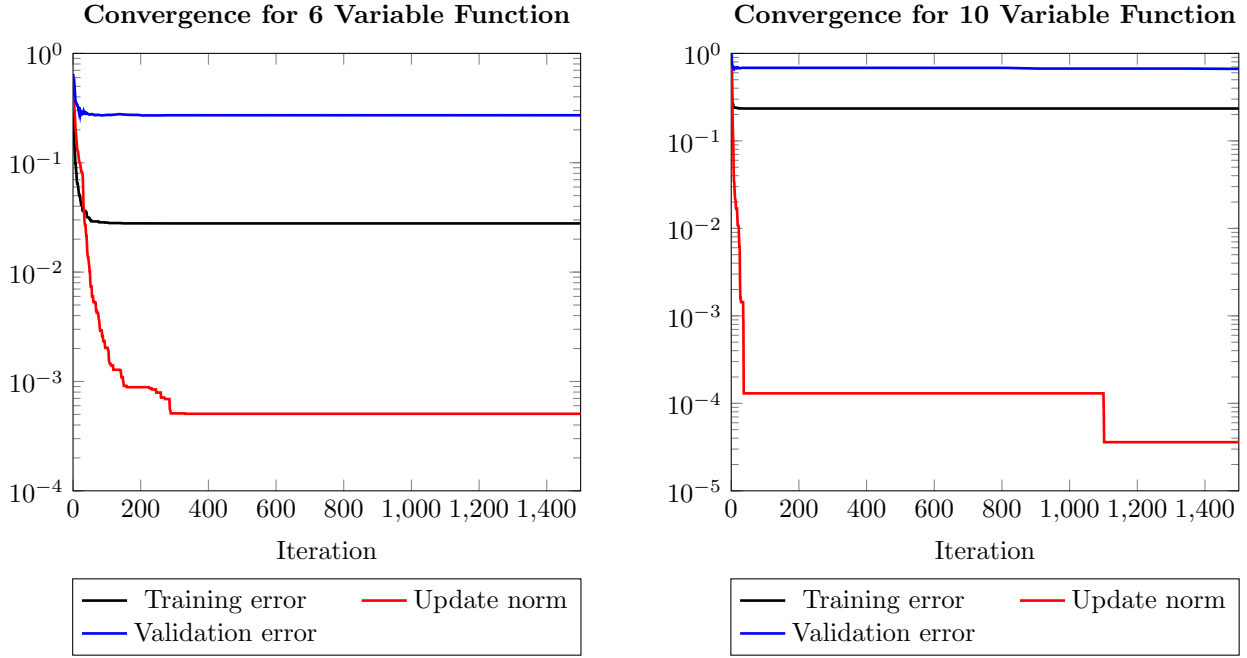

\centering
    \begin{subfigure}[b]{0.47\textwidth}
    \centering
    \input{Figures/Tikz/6var_alternate_convergence_with_test}
    \caption{Degree 3 fit for 6 variable function with 10,000 data points}
    \label{fig:6var_conv}
    \end{subfigure}
    \hfill 
    \begin{subfigure}[b]{0.47\textwidth}
    \centering
    \input{Figures/Tikz/10var_alternate_convergence_with_test}
    \caption{Degree 1 fit for 10 variable function and 10,000 data points}
    \label{fig:10var_conv}
    \end{subfigure}
    
    \caption{Convergence for tropical rational approximation of 6 and 10 variable functions. The training error and update norm display similar behavior as in the low dimensional cases with regions on which they remain constant.}
    
\end{figure}

\paragraph{Recovery of Tropical Rational Functions}

Here, we investigate the use of Algorithm \ref{alg:alt_fit} on data generated by tropical rational functions. Specifically, for $n  = 6$ we investigate the use of Algorithm \ref{alg:alt_fit} for the recovery of a tropical rational function of degrees 1 through 5 (i.e. $W_{d} = \{0,1,2,\ldots, d\}^6$ for $1\leq d\leq 5$). For each trial we generate a tropical rational function with coefficients sampled uniformly 
 at random from $[-5,5]$ as well as training and validation datasets of $N = 10000$ points sampled uniformly at random from $[-5,5]^6$. We then fit a tropical rational function $\hat{f}$ of the same degree using Algorithm \ref{alg:alt_fit} with a stopping criterion of $\eta^k\leq 10^{-8}$ or a maximum of 1000 iterations. In degrees at most 4, the method reached the stopping criterion in fewer than 1000 iterations for each trial. For degree 5, the method terminated after reaching 1000 iterations in 3 trials. In this experiment, $\mathbf{p}^0$ and $\mathbf{q}^0$ are initialized with entries drawn uniformly at random from $[-5,5]^6$. Table \ref{tab:6var_recover} shows the average relative training and validation loss $\|\hat{f}(\mathbf{x}) - \mathbf{y}\|_\infty / \|\mathbf{y}\|_\infty$ across the five trials in each degree. Here, the training loss is low, indicating that Algorithm \ref{alg:alt_fit} finds a near optimal solution. However, the validation loss is high and increasing as a function of the degree. 
 
\begin{table}
    \centering
    \begin{tabular}{c||c|c|c|c|c}
        Degree & 1&2&3&4&5 \\
        \hline
        Relative Training Error & 2.372 $\times 10^{-15}$ &
$5.869 \times 10^{-15}$ &
$9.108 \times 10^{-15}$ &
$1.286 \times 10^{-14}$ &
$9.373 \times 10^{-6} $\\
Relative Validation Error & 0.1271 &
0.2019 &
0.2869 &
0.3631 &
0.3598
    \end{tabular}
    \caption{Average training and validation error on data generated from 6 variable tropical rational functions. For each degree, the training loss is low, but the validation error is high and increasing as a function of degree.} 
    \label{tab:6var_recover}
\end{table}

\subsection{ReLU Neural Network Initialization}

Here we investigate the use of Algorithm \ref{alg:alt_fit} to initialize the weights of a ReLU neural network. In our experiments, we apply Algorithm \ref{alg:alt_fit} on data from the noisy sine curve and \texttt{peaks} datasets to generate approximations of the data then use the output tropical rational function to initialize the weights of ReLU networks. The architecture of the initialized network is determined by the number of monomomials in the tropical rational function $f$ used to initialize the network. 

The proof of \cite[Theorem 5.4]{zhang2018tropical} describes how to write a tropical rational function $f(\mathbf{x}) = p(\mathbf{x}) - q(\mathbf{x})$ as a ReLU neural network. If $g$ and $h$ are two tropical polynomials represented by neural networks $\nu$ and $\mu$, respectively, then 

\begin{equation}\label{eq:poly_sum_as_ReLU}
(g \oplus h)(\mathbf{x}) = \sigma((\nu-\mu)(\mathbf{x})) + \sigma(\mu(\mathbf{x})) - \sigma(-\mu(\mathbf{x}))) = \begin{bmatrix} 1 & 1 & -1 \end{bmatrix}\sigma\left(\begin{bmatrix}\nu(\mathbf{x}) - \mu(\mathbf{x})\\ \mu(\mathbf{x})\\ -\mu(\mathbf{x})\end{bmatrix}\right).
\end{equation}
In particular, the expression \eqref{eq:poly_sum_as_ReLU} can be applied to the case in which $g(\mathbf{x}) = \mathbf{w}^\top \mathbf{x} + g_{\mathbf{w}}$ is a tropical monomial. This allows us to take the maximum of two networks by adding a layer and appropriately concatenating weight matrices in the hidden layers. In the resulting architecture, each hidden layer decreases in width. For example a univariate degree $15$ tropical rational function $f$ can be represented via repeated applications of \eqref{eq:poly_sum_as_ReLU} as a neural network where the compositions are

\[\bbR^{1}\to \bbR^{48} \to \bbR^{24} \to \bbR^{12}\to \bbR^{6} \to \bbR^{1}.\]

For each dataset, we compare a network constructed as above to a fully connected ReLU network of the same architecture with weights initialized using the PyTorch default random weight initialization. All neural network parameter optimization is done in PyTorch using the Adam optimizer \cite{Adam} to minimize the MSE loss.

\subsubsection{Univariate Data}

We use a degree 15 tropical rational function to initialize a neural network to fit the noisy sin curve from above. The test data consists of 200 pairs $(x^{(i)},y^{(i)})$, where $x^{(i)}$ is randomly drawn points on the interval $[-1,12]$ and $y^{(i)} = \sin(x^{(i)})$. The networks are trained for 1000 epochs with batches of size 64 and a learning rate of $5\times 10^{-6}$ for the tropical initialized network and $10^{-3}$ for the randomly initialized network. We found choosing a smaller learning rate for the tropical initialization important to prevent the optimization from reducing the accuracy of the model. Training and validation errors are shown in Figure \ref{fig:sin15_nn}. The network initialized from a tropical rational function has lower training and validation error than the network with default initialization. 

\begin{figure}
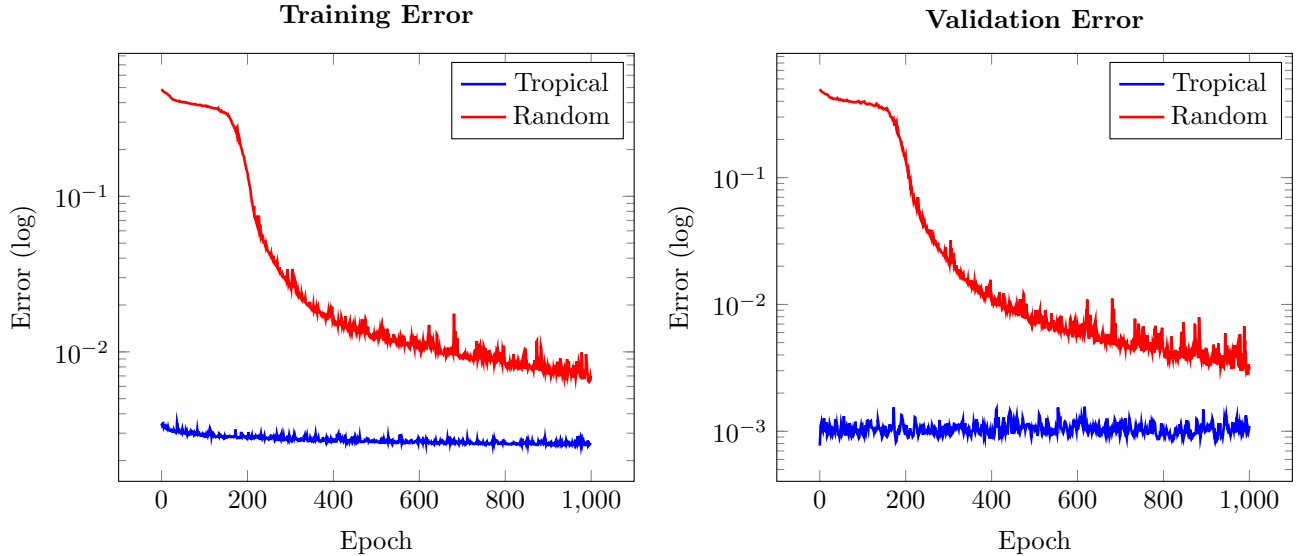

    \centering
    \begin{subfigure}[b]{0.47\textwidth}
    \input{Figures/Tikz/Sin15_train}
    \end{subfigure}
    \hfill
    \begin{subfigure}[b]{0.47\textwidth}
    \input{Figures/Tikz/Sin15_Test}
    \label{fig:my_label}
    \end{subfigure}
    \caption{Training and test loss for neural network fit to noisy $\sin$ data. The network initialized from a tropical rational approximation to the dataset starts and remains at lower training and validation losses than the network initialized with random weights.}\label{fig:sin15_nn}
\end{figure}

\subsubsection{Bivariate Data}

We use a degree $31$ tropical rational function to initialize the \texttt{peaks} dataset using Algorithm \ref{alg:alt_fit} as the initialization. The networks are trained for 100 epochs with a batch size of 64 and a learning rate of $10^{-4}$ for randomly initialized networks and $10^{-7}$ for the tropically initialized network. Results are shown in Figure \ref{fig:Peaks3131NN}. After roughly 20 epochs, the randomly initialized network outperforms the network initialized by a tropical rational function. 

\begin{figure}[t]
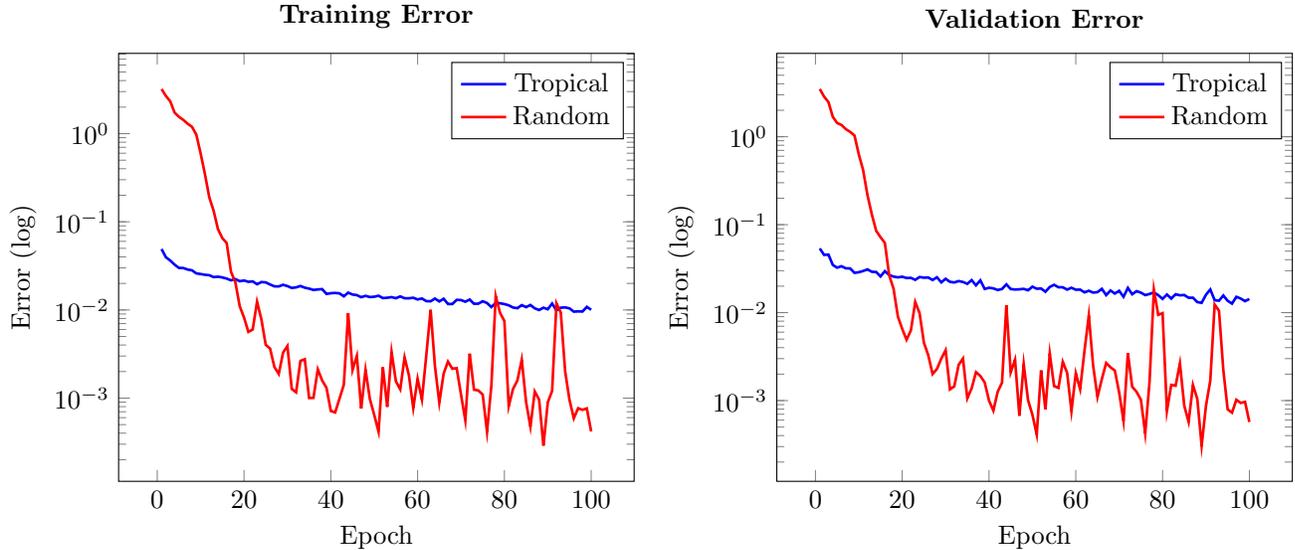

    \centering
    \begin{subfigure}[b]{0.47\textwidth}
    \input{Figures/Tikz/Peaks3131NN_train}
    \end{subfigure}
    \hfill
    \begin{subfigure}[b]{0.47\textwidth}
    \input{Figures/Tikz/Peaks3131NN_Test}
    \label{fig:Peaks3131NN_test}
    \end{subfigure}
    \caption{Training and test loss for neural network fits to \texttt{peaks} data. The randomly initialized network reaches lower training and validation errors than the tropically initialized network.}\label{fig:Peaks3131NN}
\end{figure}

\section{Conclusions}\label{sec:Conclusions}

We investigated the solution of regression with tropical rational functions by presenting an alternating heuristic. The proposed heuristic leverages known algebraic structure in tropical polynomial regression to iteratively fit numerator and denominator polynomials. Each iteration involves only (tropical) matrix-vector products and vector addition. The error at each iterate is nonincreasing, and each iterate is located in the nondifferentiability locus of the $\ell_\infty$ loss function. Computational experiments demonstrate that within a few iterations, our method can produce a qualitatively reasonable approximation of the input data. However, the optimal error and optimality conditions are unknown in general, preventing a quantitative evaluation of the heuristic. On datasets generated from tropical rational functions of low degrees where the true optimal error is known to be zero, the heuristic produces an approximation with very low training error. 

One potential application domain is in ReLU network initialization. In this work, we successfully initialized a ReLU network using a tropical rational function for a univariate regression task, while the tropical initialization was outperformed by random initialization for a bivariate regression task. This indicates the potential for future work to develop a better understanding of network initialization. In particular, the network architectures used in our experiments are limited and a full understanding of correspondences between network architectures and tropical functions and is currently an open problem.

Future work could help to develop a better theoretical understanding of the convergence behavior of Algorithm \ref{alg:alt_fit}. Additionally, future work could augment the polynomial regression steps using the ideas in \cite{HOOK2019maxplus2norm,tsiamis2019sparsity,tsilivis2022towardSparsityWeighted} to develop variants of Algorithm \ref{alg:alt_fit} for use with different norms or which enforce sparsity patterns or a regularization term. More generally, the development of a procedure for monomial selection remains open. 
\section{Acknowledgements}
This work was supported in part by NSF awards DMS 1751636, DMS 2038118, AFOSR grant FA9550-
20-1-0372, and US DOE Office of Advanced Scientific Computing Research Field Work Proposal 20-023231. 
\printbibliography

\end{document}